\newcommand{\inte}{\operatorname*{int}}
\newcommand{\aff}{\operatorname*{aff}}
\newcommand{\bd}{\operatorname*{bd}}
\newcommand{\conv}{\operatorname*{conv}}
\newcommand{\R}{\mathbb{R}}
\newcommand{\Rt}{\mathbb{R}^3}
\newcommand{\Rn}{\mathbb{R}^{n}}
\newcommand{\Sn}{\mathbb{S}^{n-1}}
\newcommand{\Sd}{\mathbb{S}^{2}}
\newtheorem{lemma}{Lemma}
\newtheorem{theorem}{Theorem}
\newtheorem{remark}{Remark}
\newtheorem{conjecture}{Conjecture}
\title {On characteristic properties of the ellipsoid in terms of circumscribed cones of a convex body}
\author{E. Morales-Amaya$^{1}$\footnote{Corresponding author: E. Morales-Amaya}, G. Mondrag\'on$^{2}$  and \\J. Jer\'onimo-Castro$^{3}$  \\
\small{$^{1,2}$Universidad Aut\'onoma de Guerrero, M\'exico}\\
\small{$^{3}$Universidad Aut\'onoma de Quer\'etaro, M\'exico}\\
 \small{\texttt{$^{1}$emoralesamaya@gmail.com,$^{2}$hermanweyl@gmail.com}}\\
 \small{\texttt{$^{3}$jesusjero@hotmail.com}}
}
\begin{document}

\maketitle
\begin{abstract}  
In order to prove two important geometrical pro\-blems in convexity, namely, the Conjecture of Bianchi and Gruber \cite{bigru} and the Conjecture of Barker and Larman \cite{Barker}, it is necessary to obtain new characteristic properties of the ellipsoid, which involves the notions defined in such problems. In this work we present a series of results which intend to be a progress in such direction: Let $L,K\subset \Rn$ be convex bodies, $n\geq 3$, and $L\subset \inte K$. Then each of the following conditions i), ii) and iii) implies that $L$ is an ellipsoid. 

i) $L$ is $O$-symmetric and, for every $x\in \bd K$, the support cone $S(L,x)$ is ellipsoidal.

ii) there exists a point $p\in \Rn$ such that for every $x\in \bd K$, there exists  $y\in \bd K$ and hyperplane $\Pi$, passing through $p$,  such that
\[
 S(L,x)\cap S(L,y)=\Pi \cap \bd K. 
\]

iii) $K$ and $L$ are $O$-symmetric, every $x\in \bd K$ is a pole of $L$ and $\Omega_x:=S(L,x)\cap S(L,-x)$ is contained in $\inte K$.   

In the case ii), $K$ is also an ellipsoid and it is concentric with $L$. On the other hand, let $K\subset \Rn$ be an $O$-symmetric convex body, $n\geq 3$, and let $B\subset \inte \Rn$ be a ball with centre at $O$. We are going to prove that if $B$ is small enough and all the sections of $K$ given by planes   tangent to $B$ are $(n-1)$-ellipsoids, then $K$ is an $n$-ellipsoid.
 \end{abstract}
\section{Introduction}
The important, and beautiful, characterization of the ellipsoid of Blaschke-Marchaud \cite{kreis}, \cite{marchaud}: 

\textit{If all the shadow boundaries of a convex body $K\subset \Rn$, $n\geq3$, are planar, then $K$ is an ellipsoid,} 

and the False Centre Theorem (FCT) of Aitchison, Petty, Rogers and Larman \cite{false_centre}, \cite{larman}:

 \textit{If all the 2-sections of a convex body $K\subset \Rn$, $n\geq3$, passing through a point $p$, are centrally symmetric, then either $K$ has centre at $p$ or is an ellipsoid,} 
 
have turned out to be very useful tools to prove that a convex body with a certain geometric condition, in particular, in terms of section or projections, is an ellipsoid. For example, on the one hand, Blaschke's characterization of the ellipsoid was used to prove that \textit{a normed space is Euclidean if orthogonality is symmetric} (See Pag. 103 of \cite{bus}), on the other hand, we can mention that a variant of Blaschke's characterization is used in the short proof of the FCT \cite{Falso_MM}. On the other hand, the FCT was applied in the sphere characterization of Montejano in terms of similar sections passing through a fix point \cite{monti}. To the reader interested in more results in relation to the ellipsoid, we recommend the classic article \cite{Petty}.

Thus, from our point of view, it is necessary to find new characterizations of the ellipsoid whose conditions allow us to address interesting problems that have remained unsolved for years. For example, between such problemas, we can mention the Conjecture of Bianchi and Gruber \cite{bigru}. In order to present such conjecture we will give the following definition.

Let $C\subset\mathbb R^n$ be a convex cone with apex $x$. We say that $C$ is a symmetric cone with axis $L_x$, if there exists a line $L_x$ through $x$ such that for every $2$-dimensional plane $\Gamma$ which contains $L_x$, it holds that $L_x$ is the angle bisector of the angular region $\Gamma\cap C$.

\textbf{Conjecture of Bianchi and Gruber}: \textit{Let $K$ be a convex body contained in the interior of the unit ball $B(n)$, $n \geq 3$. If for every $x\in \mathbb{S}^{n-1}$, $C(K,x)$ is a symmetric cone, then $K$ is an ellipsoid,} 

where, for $x \in \Rn \backslash K$, $C(K, x)$ denotes the cone generated by $K$ with apex $x$, and, on the other hand, the Conjecture of Barker and Larman: 

\textit{Let $K \subset \Rn$ be a convex body, $n \geq 3$. If there exists a ball $B\subset \inte K$ such that, for every supporting hyperplane $\Pi$ of $B$, the section $\Pi \cap K$ is centrally symmetric, then $K$ is an ellipsoid.}
 
Let $\mathbb{R}^{n}$ be the Euclidean space of dimension $n$ endowed with the usual inner product $\langle \cdot, \cdot\rangle : \mathbb{R}^{n} \times \mathbb{R}^{n} \rightarrow \R$. We take an orthogonal system of coordinates $(x_1,...,x_{n})$ for  $\mathbb{R}^{n}$. Let $B_r(n)=\{x\in \mathbb{R}^{n}: ||x||\leq r\}$ be the $n$-ball of radius $r$ centered  at the origin, and let $r\mathbb{S}^{n-1}=\{x\in \mathbb{R}^{n}: ||x|| = r\}$ be its boundary. 

Let $x,y,z\in \Rt$. We denote by $l(x,y)$ the line defined by $x$ and $y$. On the other hand, if the points $x,y,z$ are not collinear, we denote by $P(x,y,z)$ the plane defined by $x,y,z$.

Let $K\subset \Rn$ be a convex body. Given a point $x \in \Rn \backslash K$ we denote the cone generated by $K$ with apex $x$ by $C(K, x)$, that is, $C(K, x) := \{x + \lambda(y - x) : y \in K, \lambda \geq  0\}$, by $S(K, x)$ the boundary of $C(K, x)$, in other words, $S(K, x)$ is the support cone of $K$ from the point $x$ and by 
$\Sigma(K, x)$ the \textit{graze} of $K$ from $x$, that is, $\Sigma(K, x) :=S(K, x)\cap  \bd K$. A very especial case is when the apexes of the cones are points at infinity. In this case the grazes are called \textit{shadow boundaries} and they are obtained by intersections of $\bd K$ with circumscribed cylinders. The shadow boundary with respect to the direction $u$ (the line $L$) will be denoted by $S\partial (K,u)$ ($S\partial (K,L)$).

Let $C$ be a convex cone, the cone is said to be \textit{ellipsoidal} if it has one hyper-section which is an ellipsoid.

Our first result in this work is the following theorem.
\begin{theorem}\label{camioneta}
Let $L,K\subset \Rn$ be convex bodies, $n\geq 3$. Suppose that 
$L\subset \inte K$, $L$ is $O$-symmetric and, for every $x\in \bd K$, the cone $C(L,x)$ is ellipsoidal. Then $L$ is an ellipsoid.
\end{theorem}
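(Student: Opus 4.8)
The plan is to pass to polar bodies, which converts the hypothesis on circumscribed cones into a hypothesis on hyperplane sections, and then to recognize the resulting statement as one of Barker--Larman type, amenable to the classical machinery (the FCT and the Blaschke--Marchaud theorem). Since $L$ is $O$-symmetric we have $O\in\inte L\subset\inte K$, so the polar bodies $L^{\ast}$ and $K^{\ast}$ (taken with respect to $O$) are well defined, $L^{\ast}$ is again $O$-symmetric, and the inclusion $L\subset\inte K$ dualizes to $K^{\ast}\subset\inte L^{\ast}$. For $x\in\bd K$ let $H_{x}=\{u:\langle u,x\rangle=1\}$ be its polar hyperplane. The supporting hyperplanes of $L$ through $x$ are exactly the $\{y:\langle u,y\rangle=1\}$ with $u\in\bd L^{\ast}\cap H_{x}$, and they all pass through $x$; dually, this pencil of tangent hyperplanes is the projective polar, based at $x$, of the section $L^{\ast}\cap H_{x}$. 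Hence $C(L,x)$ and $L^{\ast}\cap H_{x}$ are polar to one another inside the pencil of hyperplanes through $x$, and since the projective polar of an ellipsoid is an ellipsoid, $C(L,x)$ is ellipsoidal if and only if $L^{\ast}\cap H_{x}$ is an $(n-1)$-ellipsoid. Because $K^{\ast}\subset\inte L^{\ast}$, every $H_{x}$ meets $\inte L^{\ast}$, so these are genuine $(n-1)$-dimensional sections; and as $x$ runs over $\bd K$ the hyperplanes $H_{x}$ run over precisely the supporting hyperplanes of $K^{\ast}$.

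This reduces Theorem~\ref{camioneta} to the dual statement: if $M$ is an $O$-symmetric convex body and $N\subset\inte M$ is a convex body such that $M\cap H$ is an $(n-1)$-ellipsoid for every supporting hyperplane $H$ of $N$, then $M$ is an ellipsoid (whence $L=(L^{\ast})^{\ast}$ is an ellipsoid). To attack this I would first note that the sections interlock: if $H,H'$ are supporting hyperplanes of $N$ then $M\cap H\cap H'$ is a section of the ellipsoid $M\cap H$, hence itself an ellipsoid; more generally, every $2$-plane contained in a supporting hyperplane of $N$---equivalently, every $2$-plane tangent to $N$---meets $M$ in a conic, since its touching point lies in $\bd N\subset\inte M$ and hence in the relative interior of the section. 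Thus the entire family of sections of $M$ by planes tangent to the inner body $N$ consists of ellipses.

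The endgame is then to upgrade this web of ellipse sections into the conclusion that $\bd M$ is a single quadric. The strategy is to produce one interior point $p$ of $M$ through which \emph{all} sections are centrally symmetric: one uses the central symmetry of $M$ together with the centres of the ellipsoidal sections $M\cap H$ to locate $p$, and the pairwise interlocking property to propagate central symmetry to every hyperplane through $p$. Once this is in place, the False Centre Theorem forces $M$ to be an ellipsoid or to be centred at $p$; in the latter case the classical fact that an $O$-symmetric body all of whose central plane sections are ellipses is an ellipsoid completes the proof. Alternatively, one may verify directly that all shadow boundaries of $M$ are planar and invoke the Blaschke--Marchaud theorem.

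I expect this last globalization step to be the main obstacle. The sections we control are cut by supporting hyperplanes of the auxiliary body $N$, none of which passes through the centre $O$, so there is no a priori common point through which the ellipse sections run; moreover $N$ is an arbitrary convex body rather than a ball, so the tangent-plane family need not be invariant under the symmetry of $M$. The crux is therefore to exploit the \emph{rigidity} of ellipsoidal (as opposed to merely centrally symmetric) sections---the fact that two ellipsoids sharing enough lower-dimensional sections must coincide---to transfer the quadratic structure from the off-centre tangent sections to a pencil of sections through a single point, thereby bringing the FCT and Blaschke--Marchaud theorems to bear.
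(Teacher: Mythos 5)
Your polar-duality reduction is correct as far as it goes: since $O\in\inte L$, the polars $L^{*}$ and $K^{*}$ satisfy $K^{*}\subset \inte L^{*}$, the hyperplanes $H_{x}$, $x\in\bd K$, are exactly the supporting hyperplanes of $K^{*}$, and $C(L,x)$ is ellipsoidal if and only if $L^{*}\cap H_{x}$ is an $(n-1)$-ellipsoid. But this step proves nothing: it merely restates Theorem \ref{camioneta} in an exactly equivalent dual form of the same difficulty, namely a Barker--Larman-type statement in which ``centrally symmetric'' is strengthened to ``ellipsoid'' and the inner ball is replaced by an arbitrary convex body $N=K^{*}$. This is precisely the route of Bianchi and Gruber \cite{bigru}, whose Theorem 2 contains Theorem \ref{camioneta} as a special case, and which the paper deliberately avoids (``a direct proof, that is, without resorting to polar duality''). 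The genuine gap is your endgame: you never construct the point $p$, never show that the centres of the sections $L^{*}\cap H_{x}$ can be organized in any usable way, and never show how the interlocking ellipse sections yield a pencil of centrally symmetric sections through a common point, which is what the False Centre Theorem requires. The sections you control are cut by supporting hyperplanes of $N$, a family with no common point, and you yourself flag the passage from this family to a concurrent family as ``the main obstacle'' and ``the crux.'' That passage is the entire mathematical content of the theorem, and it is left unproven; note also that the paper's own Theorem \ref{amanecer}, the closest available result of this sectional type, requires $N$ to be a ball centred at the centre of symmetry plus an extra hypothesis, so it cannot be invoked here.

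For contrast, the paper's proof is direct and exploits the $O$-symmetry of $L$ in an essential way, which your sketch never does beyond recording that $L^{*}$ is $O$-symmetric. For $x\in\bd K$ both $C(L,x)$ and $C(L,-x)$ are ellipsoidal cones, and by part iii) of Lemma \ref{gatita} there is a hyperplane $\Lambda_{x}$ through $O$ whose section of $C(L,x)$ is centred at $O$, whence $C(L,x)\cap C(L,-x)\subset\Lambda_{x}$. Lemma \ref{vane} (via the affine reflection symmetries of ellipsoidal cones, Remark \ref{frank}) then shows that for two apexes $x_{1},x_{2}\in\bd K$ whose joining line misses $L$, the chord of tangency $[a,b]$ is parallel to $\Lambda_{x_{1}}\cap\Lambda_{x_{2}}$; this forces every graze $\Sigma(L,x)$ to lie in a plane, and Theorem 5 of \cite{gjmc1} (nested bodies with planar grazes) concludes the case $n=3$, the general case following by intersecting with hyperplanes through $O$. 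If you want to salvage your dual approach, you must either reprove the relevant argument of \cite{bigru} or supply an actual proof of the dual statement; appealing to the FCT and Blaschke--Marchaud, as sketched, does not close the gap.
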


If $K\subset \Rn$ is a  $O$-symmetric body, $O$ is the origin of a coordinate system and $x\in \Rn\backslash K$, then the intersection of the support cones of $K$ with centers at $x$ and $-x$ is a $O$-symmetric set. \textit{What can we say of $K$ if, in addition, we assume that such intersection is contained in a hyperplane?}

Such question was motivated by the proof of the main result of \cite{gjmc2}. In Theorem \ref{jetta} we present a characterization of the ellipsoid which can be considered as a particular case to the previous question. We use Theorem \ref{camioneta} to prove Theorem \ref{jetta}.

 Let $L,K\subset \Rn$ be $O$-symmetric convex bodies, $n\geq 3$. Suppose that every $x\in \bd K$ the graze $\Sigma(L,x)$ is contained in a hyperplane. In the proof of the main theorem of \cite{gjmc2}, it was observed that the set $\Omega_x:=S(L,x)\cap S(L,-x)$ is contained in an hyperplane (See Lemma \ref{boquita}).  This observation motives the following problem:
\begin{conjecture}\label{tere}
Let $L,K\subset \Rn$ be convex bodies, $n\geq 3$. Suppose that $L\subset \inte K$ and, for every $x\in \bd K$, there exists  $y\in \bd K$ and hyperplane $\Pi$  such that
\begin{eqnarray}\label{nevada}
S(L,x)\cap S(L,y)=\Pi \cap \bd K. 
\end{eqnarray} 
Then $L$ and $K$ are homothetic ellipsoids.
\end{conjecture}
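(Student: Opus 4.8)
The plan is to translate the hypothesis into an intrinsic tangency condition on $\bd K$, then reduce the full statement to the already-settled concurrent case, and finally upgrade the resulting conclusion to genuine homotheticity.

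First I would record the symmetry of the tangency relation on $\bd K$. Since $L\subset\inte K$, a point $z\in\bd K$ with $z\neq x$ lies on $S(L,x)$ exactly when the chord $[x,z]$ of $K$ supports $L$; the point of tangency then lies in $\inte K$ strictly between the two endpoints, so $z$ must be the endpoint of $K$ opposite to $x$ along that chord, and the relation is symmetric in $x$ and $z$. Hence \eqref{nevada} reads: for every $x\in\bd K$ there are a partner $y=\tau(x)\in\bd K$ and a hyperplane $\Pi=\Pi(x)$ such that the locus of $z\in\bd K$ for which \emph{both} chords $[x,z]$ and $[y,z]$ are tangent to $L$ equals $\Pi\cap\bd K$. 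I would then argue that, for each $x$, the partner $y$ and the plane $\Pi(x)$ are uniquely determined by this planar-locus requirement, so that $\tau$ is an involution of $\bd K$ and $x\mapsto\Pi(x)$ is a continuous map into the space of hyperplanes.

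The core of the argument is to show that all the hyperplanes $\Pi(x)$ pass through one common point $p$ (precisely the extra datum granted in case (ii) of the abstract). I would attempt this by an enveloping/continuity argument: as $x$ runs over $\bd K$ the family $\{\Pi(x)\}$ is an $(n-1)$-parameter family of hyperplanes, and the reciprocity ``$z\in S(L,x)\cap S(L,y)\iff x,y\in S(L,z)$'' together with the involutive nature of $\tau$ should force the positions and normals of the $\Pi(x)$ to obey a linear constraint whose solution locus is a single point. Equivalently, $\Pi(x)$ should be identifiable as the polar hyperplane, with respect to $L$, of a point naturally attached to the pair $\{x,\tau(x)\}$, and these poles should coincide. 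Once concurrency is secured the hypothesis becomes exactly case (ii), and Theorem \ref{jetta} yields that $L$ is an ellipsoid and that $K$ is a concentric ellipsoid. To upgrade ``concentric'' to ``homothetic,'' I would exploit the affine invariance of the whole hypothesis: applying an affine map carrying $L$ to a ball $B$ (so that $K$ becomes a concentric ellipsoid), it suffices to prove that the double-tangency planarity condition forces this ellipsoid $K$ to be a ball. In that normalized situation the tangent chords to $B$ are governed by the rotational symmetry of the sphere, which I would combine with a Blaschke--Marchaud type rigidity—forcing the planar double-tangency loci to behave like planar shadow boundaries—to conclude that $\bd K$ is a sphere concentric with $B$, hence $K$ and $L$ homothetic.

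The step I expect to be the genuine obstacle—and the reason the statement is posed as Conjecture \ref{tere} rather than proved—is the concurrency of the hyperplanes $\Pi(x)$. Without a common point the family $\{\Pi(x)\}$ carries no a priori projective centre, so controlling its envelope directly from the bare hypothesis is delicate; I would expect the circle of ideas behind the False Centre Theorem to be the right machinery for manufacturing such a centre, but bridging from the double-tangency condition to a false (or true) centre is exactly where a complete proof is presently missing.
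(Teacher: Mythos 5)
Your self-diagnosis is accurate, and it is the decisive point: the statement you were given is posed in the paper as Conjecture \ref{tere}, and the paper contains no proof of it. What the paper proves is exactly the special case your plan reduces to, Theorem \ref{jetta}, in which the concurrency of the hyperplanes at a point $p\in\inte K$ is part of the \emph{hypothesis}, not a conclusion. So the gap you name --- extracting a common point for the family $\{\Pi(x)\}$ from the bare double-tangency condition --- is not a bridgeable expository step; it is the open problem itself. Your proposed mechanisms for it are speculative: the reciprocity ``$z\in S(L,x)\cap S(L,y)\iff x,y\in S(L,z)$'' is a symmetric incidence relation, but it is not a polarity unless $L$ is already known to be a quadric, so there is no a priori projective structure forcing the ``poles'' attached to the pairs $\{x,\tau(x)\}$ to coincide; likewise the uniqueness and continuity of $y=\tau(x)$ and of $\Pi(x)$ are asserted without proof --- in the paper these are obtained (Lemma \ref{marisol} and step 3) of the proof of Theorem \ref{jetta}) only \emph{after} the common point $p$ is available, so you cannot borrow them. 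Even granting concurrency, you would still need the common point to lie in $\inte K$, as Theorem \ref{jetta} requires, and the degenerate case in which all the $\Pi(x)$ are parallel (concurrency at infinity) would need separate treatment.

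Two smaller points. First, your rephrasing of the hypothesis as ``the locus of $z\in\bd K$ for which both chords are tangent to $L$ is planar'' weakens it: the relation $S(L,x)\cap S(L,y)=\Pi\cap\bd K$ says that the \emph{full} intersection of the two cone boundaries, a set in $\Rn$, lies on $\bd K$ and is planar, and the paper's proof of Theorem \ref{jetta} exploits precisely this stronger reading (e.g.\ in establishing strict convexity of $K$ and in pinning down the partner $y$ via supporting lines of $\Pi\cap L$). Second, your closing ``upgrade from concentric to homothetic'' via a Blaschke--Marchaud argument is unnecessary relative to the paper: Theorem \ref{jetta} as stated already concludes that $L$ and $K$ are concentric \emph{homothetic} ellipsoids (its proof runs through affine diameters and Soltan's theorem, Radon curves and Thompson's symmetric-normality theorem to make $K$ an ellipsoid, and then Theorem \ref{camioneta} for $L$), so once concurrency were established nothing further would be needed. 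In short: your reduction correctly identifies how the conjecture sits over Theorem \ref{jetta}, and you are right that the concurrency of the $\Pi(x)$ is where a complete proof is presently missing --- which is exactly why the paper states this as a conjecture.
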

In this work we present a particular case of the Conjecture \ref{tere}. Before we state such result, we give the next example which will show the main conditions involved. 
We denote by $N$ and $S$ the north and south poles of $\mathbb{S}^{n-1}$, that is, the points $(0,0, ..., 1)$ and $(0,0, ..., -1)$, respectively, and let $S_N$ denote the equator of $\mathbb{S}^n$. Let $B$ be the unique sphere inscribed in the convex body $\conv (\{N,S\}\cup S_N)$. By virtue of the symmetry of the sphere, it follows that, for every $x\in \mathbb{S}^n$, the relation
\begin{eqnarray}\label{lluvia}
S(B,x)\cap S(B,-x)= S_x. 
\end{eqnarray}
holds, where $S_x$ is the great circle of $\mathbb{S}^{n-1}$ perpendicular to $x$.

Let $A:\Rn \rightarrow \Rn$ be  an affine map. We denote by $E$ and $F$ the homothetic ellipsoid $A(\mathbb{S}^{n-1})$ and $A(B)$, respectively. Then, by (\ref{lluvia}), it follows that, for every $y\in E$, the relation
\[
S(F,y)\cap S(F,-y)= A(S_x)\subset E. 
\]
holds, where $x$ is such that $y=A(x)$.
The next results shows that the inverse statement holds.   
\begin{theorem}\label{jetta}
Let $L,K\subset \Rn$ be convex bodies, $n\geq 3$, and let $p\in \inte K$. Suppose that $L\subset \inte K$ and, for every $x\in \bd K$, there exists  $y\in \bd K$ and hyperplane $\Pi$, passing through $p$,  such that
\begin{eqnarray}\label{nevada}
S(L,x)\cap S(L,y)=\Pi \cap \bd K. 
\end{eqnarray} 
Then $L$ and $K$ are concentric homothetic ellipsoids.
\end{theorem}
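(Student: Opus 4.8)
The plan is to first translate the hypothesis into a statement about the support cones of $L$ alone, then to show that these cones are ellipsoidal and that $L$ is centrally symmetric so that Theorem~\ref{camioneta} applies, and finally to recover $K$ by an affine normalization. The first move is to record what the equality $S(L,x)\cap S(L,y)=\Pi\cap\bd K$ says about the sections of the two cones by $\Pi$. Writing $\gamma:=\Pi\cap\bd K=\bd(\Pi\cap K)$, both $\gamma$ and the cone section $S(L,x)\cap\Pi$ are boundaries of $(n-1)$-dimensional convex bodies lying in $\Pi$, and $\gamma\subseteq S(L,x)\cap\Pi$; since a closed convex hypersurface in $\Pi$ cannot be a proper subset of another one of the same dimension, equality holds, and likewise for $y$. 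Thus $C(L,x)$ and $C(L,y)$ are precisely the cones over the common base $\gamma$ with apices $x$ and $y$, both tangent to $L$. Equivalently, the central projections of $L$ from $x$ and from $y$ onto $\Pi$ coincide, each being the section $\Pi\cap K$; the composite of these two perspectivities is a projective homology with axis $\Pi$ carrying $\Sigma(L,x)$ onto $\Sigma(L,y)$ and $C(L,x)$ onto $C(L,y)$.

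The central step is to prove that each cone $C(L,x)$ is ellipsoidal and that $L$ is symmetric about $p$. Here I would slice by the $2$-planes $\aff\{x,y,z\}$ containing the axis $\aff\{x,y\}$: for $z$ in $\gamma$ meeting such a plane, the lines $\aff\{x,z\}$ and $\aff\{y,z\}$ are the two tangents to the planar convex body $L\cap\aff\{x,y,z\}$ through the point $z$, so this section is inscribed in the quadrilateral whose vertices are $x$, $y$ and the two points of $\gamma$ in the plane, one diagonal lying on the fixed axis and the other on the trace of $\Pi$. Letting the plane rotate about $\aff\{x,y\}$ produces such a circumscribed quadrilateral in every section. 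The goal is to show, using crucially that all the hyperplanes $\Pi$ pass through the single point $p$, that the relevant $2$-sections are centrally symmetric; an application of the False Centre Theorem then forces $L$ (and $K$) to be centrally symmetric about $p$ and the common base $\gamma$ to be an ellipsoid, whence $C(L,x)$ is ellipsoidal. With $L$ now $O$-symmetric (after translating the centre to $O$) and every $C(L,x)$ ellipsoidal, Theorem~\ref{camioneta} yields that $L$ is an ellipsoid.

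It remains to identify $K$. Applying a suitable affine map I may assume $L$ is a Euclidean ball $B$ centred at $p$, since the whole hypothesis is affinely invariant. Then every $C(B,x)$ is a right circular, hence quadric, cone, so each $\gamma=\Pi\cap\bd K$ is a plane section of a quadric cone, i.e.\ a conic bounding the convex set $\Pi\cap K$, and therefore an ellipsoid. As $x$ ranges over $\bd K$ the hyperplanes $\Pi$ through $p$ sweep out an open family, and a convex body possessing an open family of ellipsoidal hyperplane sections through an interior point $p$ must be an $n$-ellipsoid; moreover the rotational symmetry of $B$ together with the classical Dandelin configuration of two cones circumscribed about a sphere forces the partner of $x$ to be its antipode through $p$ and the centre of $K$ to coincide with $p$, so that $K$ is concentric and homothetic with $B$. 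Undoing the affine map, $L$ and $K$ are concentric homothetic ellipsoids.

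The main obstacle is the central step: extracting central symmetry and ellipticity of the sections from the twin tangent-cone condition, and in particular exploiting the single common point $p$ to locate the centre of symmetry. Everything downstream---the invocation of Theorem~\ref{camioneta} and the affine reduction used for $K$---is comparatively routine once the cones are known to be ellipsoidal and $L$ is known to be symmetric about $p$.
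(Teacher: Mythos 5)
Your proposal has a genuine gap, and you have identified it yourself: the ``central step'' --- proving that $L$ is centrally symmetric about $p$ and that each cone $C(L,x)$ is ellipsoidal --- is stated as a goal, not proved. This is precisely where all the work of the theorem lies, and your sketch of it does not go through as described. Slicing by $2$-planes through $\aff\{x,y\}$ gives you, in each plane, a section of $L$ inscribed in a quadrilateral, but a convex figure inscribed in a quadrilateral (even with prescribed diagonals) carries no central symmetry whatsoever; and the False Centre Theorem needs \emph{every} $2$-section through a fixed point to be centrally symmetric, a hypothesis your configuration never produces. Note also an order-of-quantifiers obstruction in your plan: the natural way to see that $C(L,x)$ is ellipsoidal is that its base $\gamma=\Pi\cap\bd K$ is an ellipsoid, i.e.\ that the sections of $K$ through $p$ are ellipsoids --- so any completion of your central step essentially forces you to prove first that $K$ is an ellipsoid, which is the opposite of the order you propose ($L$ first, then $K$).

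That is in fact what the paper does, and comparing the two routes shows what is missing. The paper's proof (for $n=3$) establishes, in sequence: (1) $K$ is strictly convex; (2) the hyperplane $\Pi$ associated to a pair $x,y$ admits parallel supporting planes of $K$ at $x$ and $y$; (3)--(4) every chord of $Z=\Pi\cap\bd K$ through $p$ is an affine diameter, so by Soltan's theorem each such section, and hence $K$, is centrally symmetric about $p$; (5) pairs of conjugate diameters exist in every section through $p$, so each such section is a Radon curve, and by Thompson's theorem (symmetry of normality in dimension $\geq 3$ implies inner product space) $K$ is an ellipsoid. Only then do the cones $S(L,x)$ become elliptical (step 6), and the central symmetry of $L$ is not elementary either: the paper derives it (step 8) from strict convexity of $L$ plus the theorem of \cite{papiefen} on bodies whose support cones from paired exterior points differ by a central symmetry. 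Your proposal contains no substitute for any of this machinery --- Soltan's affine-diameter criterion, the Radon curve/Thompson argument, and the \cite{papiefen} symmetry theorem --- and without it the appeal to Theorem~\ref{camioneta} cannot be made. (Your opening reduction, identifying $S(L,x)\cap\Pi$ with $\Pi\cap\bd K$, is correct and agrees with the paper's interpretation; your closing reduction for $K$ is repairable but also incomplete as written: you would need the continuity argument, used in the paper's step 5, showing that \emph{every} hyperplane through $p$ occurs as some $\Pi$, rather than an unspecified ``open family.'')
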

 We complete $\mathbb{R}^n$ to $n$-dimensional projective space $\mathbb{P}^n$ by adding the hyperplane at infinity. We say that $O$ is a pole of $K$ if there is a hyperplane $H$ of $\mathbb{P}^n$ with the property that, for every line $L$ through $O$ such that $\bd K \cap L = \{A, B\}$, the cross ratio of $A, B, O$ and the intersection of $L$ and $H$ is minus one. That is,
\[
[A, B; O, L \cap H] = -1.
\]
If this is so, we say that $H$ is a polar hyperplane of $K$ and also that $H$ is the polar of the pole $O$ (See, for example, \cite{MMpolo}).

If $O \in  \inte K$ is a pole of $K$, then we say that $O$ is a \textit{projective centre of symmetry} of $K$ because, in this case, the polar of $K$ is a hyperplane $H$ that does not intersect $K$, and if $\pi$ is a projective isomorphism that sends $H$ to the
hyperplane at infinity, then $\pi(K)$ is a compact centrally symmetric convex body
with centre $\pi(O)$. 

If $O\in \mathbb{P}^n \backslash K$ is a pole of $K$, then its polar $H$ will be called a \textit{projective hyperplane of symmetry} of K. In this case, $H\cap \bd K\not= \emptyset$. It is easy to see that
\begin{eqnarray}\label{meche}
H\cap \bd K =\Sigma(K,O) 
\end{eqnarray}
If $\pi$ is a projective isomorphism that sends $\pi(K)$ into a compact convex body and $\pi(O)$ into a point at infinity, then $\pi(H)$ is an affine hyperplane of symmetry of $K$ (See the definition of affine hyperplane of symmetry at the beginning of the second paragraph of section 2).

Let $E\subset \mathbb{R}^n \subset \mathbb{P}^n$ be an ellipsoid. Then every point $O\in \mathbb{P}^n \backslash \bd K$ is a pole of $E$ and every non-tangent hyperplane of $E$ is a polar of $E$.
\begin{theorem}\label{gina}
Let $L,K\subset \Rn$ be $O$-symmetric convex bodies, $n\geq 3$. Suppose that every $x\in \bd K$ is a pole of $L$ and $\Omega_x:=S(L,x)\cap S(L,-x)$ is contained in $\inte K$. Then $L$ is an ellipsoid.
\end{theorem}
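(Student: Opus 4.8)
The plan is to reduce the statement, via central $2$-planar sections, to a planar characterization of the ellipse, extracting that characterization from the pole hypothesis. The first step is to convert each pole into a genuine projective symmetry of $L$. Fix $x\in\bd K$ with polar hyperplane $H_x$; since $L\subset\inte K$ the point $x$ lies outside $L$, so $H_x$ is a projective hyperplane of symmetry and by \eqref{meche} the graze $\Sigma(L,x)=H_x\cap\bd L$ is a nonempty planar set. Let $\tau_x$ denote the harmonic homology of $\mathbb{P}^n$ with centre $x$ and axis $H_x$. For a chord of $L$ through $x$ meeting $\bd L$ at $A,B$, the defining relation $[A,B;x,\ell\cap H_x]=-1$ says exactly that $\tau_x$ interchanges $A$ and $B$; hence $\tau_x(\bd L)=\bd L$ and $\tau_x(L)=L$. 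Thus every $x\in\bd K$ supplies a projective involution preserving $L$, and together with the point reflection $\iota\colon z\mapsto -z$ (a symmetry of $L$ because $L$ is $O$-symmetric) this produces a large group of projective automorphisms of $L$.

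Next I would descend to planar central sections. Let $E$ be an arbitrary $2$-plane through $O$ and set $D:=L\cap E$, a centrally symmetric planar convex body. For each $x\in\bd K\cap E$ the homology $\tau_x$ fixes $x$ and fixes $H_x$ pointwise and leaves invariant every line through $x$; hence it preserves $E$ and restricts there to a harmonic homology of $E$ with centre $x$ and axis the line $H_x\cap E$. Since $\tau_x(D)=D$, the point $x$ is an exterior pole of $D$ in $E$; its polar is the chord of contact $H_x\cap E$, a genuine secant, because the two tangency points from $x$ are fixed by $\tau_x|_E$ and therefore lie on the axis. As $O\in\inte K$, the trace $\bd K\cap E$ is a closed convex curve, so $D$ carries a whole one-parameter family of exterior poles. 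The hypothesis $\Omega_x\subset\inte K$ enters here to keep the configuration non-degenerate—forcing the grazes, and with them the polars $H_x\cap E$, to be honest secants rather than supporting objects—and it is also what places the data within reach of Theorem \ref{camioneta}: indeed, since $S(L,x)\cap H_x=\Sigma(L,x)=\bd(L\cap H_x)$, one has $L\cap H_x=C(L,x)\cap H_x$, so $C(L,x)$ is ellipsoidal precisely when $L\cap H_x$ is an ellipsoid.

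The heart of the matter, and the step I expect to be the main obstacle, is the planar rigidity statement: \emph{a centrally symmetric planar convex body admitting a one-parameter family of exterior poles must be an ellipse}. The mechanism is that the involutions $\tau_x|_E$, $x\in\bd K\cap E$, generate an infinite subgroup of projective transformations preserving $D$; a centrally symmetric convex body whose projective automorphism group is infinite can only be an ellipse, since triangles—the bodies with large projective automorphism groups—are excluded by central symmetry, and polygons admit only finitely many such involutions. Making this precise, namely controlling the group generated by a continuum of harmonic homologies or else deriving the conic directly from the harmonic relations carried along the curve $\bd K\cap E$, is the delicate point; one natural route is to invoke the projective characterizations of the ellipse attached to the notion of pole (cf. \cite{MMpolo}), and an alternative is to feed the inherited poles into Theorem \ref{camioneta} through the identity $L\cap H_x=C(L,x)\cap H_x$ noted above, reducing the dimension.

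Finally, once every central section $D=L\cap E$ is known to be an ellipse, the conclusion follows. The gauge of the $O$-symmetric body $L$ is a norm whose restriction to each $2$-dimensional linear subspace $E$ has an ellipse for its unit ball, hence is Euclidean; and a norm that is Euclidean on every $2$-plane is Euclidean, because the parallelogram law, valid on each such plane, holds for all pairs of vectors. Therefore the norm is induced by an inner product and $L$ is an ellipsoid, as claimed.
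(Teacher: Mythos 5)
Your reduction to planar sections is sound as far as it goes: each pole $x\in\bd K$ does give a harmonic homology $\tau_x$ preserving $L$; a $2$-plane $E$ through $O$ and $x$ is a union of lines through $x$, hence $\tau_x$-invariant; and so every point of $\bd K\cap E$ is an exterior pole of $D:=L\cap E$. The genuine gap is the step you yourself flag as delicate: the planar lemma that a centrally symmetric convex figure with a one-parameter family of exterior poles is an ellipse. The justification you offer --- the group generated is infinite, and triangles and polygons are the only non-ellipses with large projective automorphism groups --- is not correct as stated. Infinitude of $\mathrm{Aut}(D)$ does \emph{not} suffice: there exist non-elliptical, strictly convex planar convex bodies whose projective automorphism group is infinite but discrete, namely divisible convex domains of Kac--Vinberg/Goldman type, and such domains can carry a harmonic homology with interior centre and axis disjoint from the domain (for instance by realizing projectively an orientation-preserving involution of a surface on a non-Fuchsian invariant convex projective structure), i.e.\ they can be genuinely centrally symmetric. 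So ruling out triangles and polygons rules out the wrong competitors. What saves your situation is that your family $\{\tau_x\}_{x\in\bd K\cap E}$ is uncountable, so $\mathrm{Aut}(D)$, being a closed uncountable subgroup of the projective group, is non-discrete and contains a one-parameter subgroup; but then the lemma still requires a case analysis of elliptic, hyperbolic and parabolic one-parameter subgroups and of their invariant properly convex sets (conjugates of rotation groups force ellipses; hyperbolic flows admit invariant triangles, projective cones, and regions bounded by power-curve arcs and segments, which must be excluded one by one using the central symmetry; regular parabolic flows admit only parabola regions). None of this appears in your proposal, and it is the mathematical core of the argument. Your fallback suggestions do not close it: feeding the poles into Theorem \ref{camioneta} via the identity $L\cap H_x=C(L,x)\cap H_x$ is circular, since by that very identity $C(L,x)$ is ellipsoidal if and only if the polar section $L\cap H_x$ is an ellipsoid, which is exactly what is to be proved.

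A further warning sign is that you never genuinely use the hypothesis $\Omega_x\subset\inte K$; in your sketch it plays only a cosmetic non-degeneracy role. The paper's proof, by contrast, depends on it essentially and is entirely different in mechanism: for $n=3$ it observes that the pole condition makes every graze $\Sigma(L,x)$ planar and $L$ strictly convex, proves (Lemma \ref{boquita}) that $\Omega_z$ lies in the plane $\Pi_z$ through $O$ parallel to the polar plane of $z$, and then uses $\Omega_z\subset\inte K$ to verify exactly the ``almost free'' condition needed to invoke Theorem 1 of \cite{gjmc2}, after which induction on central hyperplane sections handles $n>3$. If your projective-group route could be completed it would in fact prove a stronger statement, with the $\Omega_x$ hypothesis deleted --- which is all the more reason the missing planar lemma must be proved rather than asserted.
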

\begin{theorem}\label{amanecer}
Let $K\subset \Rn$ be $O$-symmetric convex body, $n\geq 3$, and let $B\subset \inte \Rn$ be a ball with centre at $O$. Suppose that, for every hyperplane $\Pi$ tangent to $B$: (1) the section $\Pi\cap K$ is an ellipsoid,  
(2) the ball $B$ is contained in the interior of the cylinder $H_{\Pi}:=\conv [(\Pi\cap K)\cup (-\Pi\cap K)]$. Then $K$ is an ellipsoid.
\end{theorem}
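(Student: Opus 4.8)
The plan is to pass to the polar body and reduce the statement to Theorem \ref{camioneta}. Write $\rho$ for the radius of $B$ and let $K^{\circ}$ denote the polar body of $K$ with respect to the unit sphere $\Sn$; since $K$ is $O$-symmetric, so is $K^{\circ}$. First I would observe that condition (2) already forces $B\subset \inte K$: indeed $H_{\Pi}=\conv[(\Pi\cap K)\cup(-\Pi\cap K)]\subset K$ for every tangent hyperplane $\Pi$, so the ball $B\subset \inte H_{\Pi}$ is an open subset of $K$ and hence lies in $\inte K$. (This is where the hypothesis is really used; note that the printed containment ``$B\subset \inte\Rn$'' is vacuous, so the genuine containment must be extracted from (2).) Dualising, $B\subset \inte K$ yields $K^{\circ}\subset \inte B_{1/\rho}(n)$, i.e. $K^{\circ}$ lies strictly inside the ball of radius $1/\rho$ centred at $O$.

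The heart of the argument is a section--cone duality. For $u\in\Sn$ the tangent hyperplane $\Pi_u=\{x:\langle x,u\rangle=\rho\}$ is the polar hyperplane of the point $q_u:=u/\rho$, which satisfies $\|q_u\|=1/\rho$, i.e. $q_u\in \bd B_{1/\rho}(n)$. Because $q_u\notin K^{\circ}$ (this is exactly $\Pi_u\cap\inte K\neq\emptyset$, guaranteed by $B\subset\inte K$), the support cone $C(K^{\circ},q_u)$ is a proper cone with apex $q_u$. I would then prove
\[
\Pi_u\cap K \text{ is an ellipsoid}\quad\Longleftrightarrow\quad C(K^{\circ},q_u)\text{ is ellipsoidal.}
\]
The mechanism is projective: applying the polarity with centre $O$ (equivalently, a projective map sending $\Pi_u$ to the hyperplane at infinity) carries the bounded section $\Pi_u\cap K$ to the central projection of $\bd K^{\circ}$ from $q_u$, which is precisely a hyper-section of $C(K^{\circ},q_u)$. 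Since pole--polar duality and projective maps send ellipsoids to ellipsoids, the two conditions are equivalent, and the ``$B$ small enough'' requirement (2) is what keeps the corresponding cross-section a genuine \emph{bounded} ellipsoid, so that $C(K^{\circ},q_u)$ is ellipsoidal in the sense of the paper.

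Granting the duality, the hypothesis that every tangent section $\Pi_u\cap K$ is an $(n-1)$-ellipsoid translates into: for every $x=q_u\in\bd B_{1/\rho}(n)$ the cone $C(K^{\circ},x)$ is ellipsoidal. Setting $L:=K^{\circ}$ and taking the outer body to be $B_{1/\rho}(n)$, all hypotheses of Theorem \ref{camioneta} are met, namely $L\subset \inte B_{1/\rho}(n)$, $L$ is $O$-symmetric, and $C(L,x)$ is ellipsoidal for every $x\in\bd B_{1/\rho}(n)$. Hence $L=K^{\circ}$ is an ellipsoid, and therefore $K=(K^{\circ})^{\circ}$ is an ellipsoid, concentric with $B$. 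The main obstacle is making the displayed equivalence precise and uniform --- in particular verifying that the cross-section produced on the polar side is bounded and non-degenerate for \emph{every} $u$ simultaneously, which is exactly the point at which the cylinder condition (2), rather than the bare containment $B\subset\inte K$, is the convenient hypothesis.
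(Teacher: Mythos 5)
Your proposal is correct, and it takes a genuinely different route from the paper. You reduce Theorem \ref{amanecer} to Theorem \ref{camioneta} via polarity: for $q_u=u/\rho$ one has $C(K^{\circ},q_u)=\{y\in\Rn:\langle x,y\rangle\le 1 \text{ for all } x\in \Pi_u\cap K\}$, and its cross-section by the central hyperplane $\{y:\langle y,u\rangle=0\}$ is a copy of the polar of $\Pi_u\cap K$ taken inside $\Pi_u$ with the tangency point $\rho u$ as centre; since $B\subset\inte K$ puts $\rho u$ in the relative interior of the section, this cross-section is bounded, and it is an ellipsoid exactly when $\Pi_u\cap K$ is one. (This two-line support-function computation is what your ``projective mechanism'' needs in order to be rigorous; note also that only the forward implication, section ellipsoid implies cone ellipsoidal, is actually used.) Then every cone $C(K^{\circ},x)$ with $x\in\bd B_{1/\rho}(n)$ is ellipsoidal, and Theorem \ref{camioneta} applied to $L=K^{\circ}\subset\inte B_{1/\rho}(n)$ gives that $K^{\circ}$, hence $K$, is an ellipsoid. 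The paper argues quite differently and deliberately without duality: it first proves the auxiliary Theorem \ref{basico} (if every hyperplane through $p$ lies in an $\epsilon$-slab all of whose sections are centrally symmetric, then the strictly convex body $K$ is an ellipsoid), whose proof rests on the False Centre Theorem and Proposition 2 of \cite{Falso_MM}; it then uses the translation function $\phi$ and a parallelogram argument to show that every section of $K$ parallel to a tangent plane $G(u)$ and lying between $G(u)$ and $G(-u)$ is centrally symmetric, and there condition (2) is used essentially (it is what forces $\phi(v)$ to be parallel to $H(u)$ for $v\in S_{\phi(u)}$). The trade-off: your argument is shorter, it exhibits Theorems \ref{camioneta} and \ref{amanecer} as polar duals of one another, and it proves a formally stronger statement, since condition (2) enters only through its consequence $B\subset\inte K$; what the paper's longer route buys is a duality-free proof, consistent with the authors' stated aim of avoiding polarity, together with the intermediate Theorem \ref{basico}, which has independent interest.
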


In this work we are going to present 6 characterizations of the ellipsoid. The theorems \ref{camioneta} and \ref{amanecer}, in our opinion, are generalizations of the Lemma 2 of \cite{chakerian} and theorem 16.12 from Busemann's book \cite{bus}, respectively. The Theorem \ref{camioneta} and the Theorem \ref{amanecer} are proved in sections 2 and 5, respectively. In Theorem \ref{camioneta}, in order to prove that a convex body $L\subset \Rn$ is an ellipsoid, instead of taking into account \textit{ellipsoidal circumscribed cylinders} of $K$, we consider \textit{ellipsoidal circumscribed cones} of $L$ with vertices in another convex body $K$ such that $L\subset \inte K$.  On the other hand, in Theorem \ref{amanecer}, instead of taking into account concurrent sections of a convex body $L\subset \Rn$ that are ellipsoids, we consider tangent sections to a ball $B$, $L\subset \inte B$, that are ellipsoids. In both Theorem \ref{camioneta} and Theorem \ref{amanecer} we have assumed that the convex body $L$ is centrally symmetric because we are interested in finding a solution to the most symmetric cases of the Conjecture of
Bianchi-Gruber and of Conjecture the Barker-Larman. 
 Although Theorem \ref{camioneta} is a special case of Theorem 2 of 
 \cite{bigru}, in this work we present a direct proof, that is, without resorting to polar duality. We believe that this point of view can be useful for a solution to the aforementioned conjectures, as has been shown by the proof of the main theorem of \cite{gjmc2}.

The Theorem \ref{jetta} and the Theorem \ref{gina} are proved in sections 3 and 4, respectively.
\section{Characterizations of the ellipsoid in terms of e\-llip\-soidal circumscribed cones of a convex body}
Before giving the proof of the Theorem \ref{camioneta} we are going to present some auxiliary results. The first of them is in relation to three properties of ellipsoidal cones. In the next two we are going to assume that the hypothesis of the Theorem \ref{camioneta} is fulfilled. The second result refers to the intersection of cones with vertices at points symmetrical with respect to the center of the convex and the third result will help us demonstrate that the grazes of the convex are flat.   
  
Let $K\subset \Rn$ be either a convex body or a convex cone, let $H\subset \Rn$ be an hyperplane and let $u\in \Sn$. The hyperplane is said to be an \textit{affine hyperplane of symmetry of $K$} if every chord of $K$, parallel to $u$, has its mid-point in $H$.  
\begin{lemma}\label{gatita}
Let $C(x)\subset \Rn$ be convex cone with apex at $x\in \Rn$. If $C(x)$ is ellipsoidal the following properties holds:
\begin{itemize}
\item [i)] All the bounded hyper-sections of $C(x)$ are ellipsoids.
\item [ii)] Every hyperplane passing through $x$ and with interior points of $C(x)$ is an affine hyperplane of symmetry of $C(x)$.
\item [iii)] If $L$ is a ray passing through $x$ and it is contained in $\inte C(x)$, then there exists an hyperplane $\Pi$ such that the ellipsoid $\Pi \cap C(x)$ has centre at $\Pi \cap L$.
\end{itemize}
\end{lemma}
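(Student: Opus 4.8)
The plan is to reduce all three assertions to a single normal form and then verify them by linear algebra. After translating the apex $x$ to the origin $O$, the section $E_0=H_0\cap C(x)$ is a compact $(n-1)$-ellipsoid, so $C(x)$ is pointed and $C(x)=\operatorname{cone}(E_0)$. Writing $H_0=\{\langle a,z\rangle=1\}$ and $E_0=\{(y',1):(y'-c')^{T}P(y'-c')\le 1\}$ in adapted coordinates, the linear map $T(z',z_n)=(P^{1/2}(z'-z_nc'),z_n)$ fixes $O$ and carries $E_0$ to a round ball centred on the $z_n$-axis, hence carries $C(x)$ to the standard right circular cone $|z'|^2\le z_n^{2}$, $z_n\ge 0$. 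Pulling this back through $T^{-1}$ shows that $C(x)$ is one nappe of $\{z:Q(z)\le 0\}$ for a nonsingular quadratic form $Q(z)=z^{T}Az$ of signature $(n-1,1)$, with $\bd C(x)=\{Q=0\}$. Because nonsingular affine maps send chords to chords, midpoints to midpoints, centres of ellipsoids to centres, and bounded sets to bounded sets, the three properties are affine invariants, so it suffices to treat $C(x)=\{Q\le 0\}$. For i) this is immediate: a hyperplane $H_1$ meets $\bd C(x)$ in a quadric of $H_1$ bounding the convex region $H_1\cap C(x)$, and if that region is bounded it is a bounded convex quadric, hence an $(n-1)$-ellipsoid.

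For ii), let $\Pi=\{\langle a,z\rangle=0\}$ pass through the apex and meet $\inte C(x)$, and set $u:=A^{-1}a$. A chord $z_0+tu$ of $C(x)$ has its two boundary parameters as roots of $Q(z_0)+2t\langle Az_0,u\rangle+t^{2}Q(u)=0$, so the midpoint corresponds to $t_m=-\langle Az_0,u\rangle/Q(u)$. Since $Au=a$ gives $\langle a,u\rangle=\langle Au,u\rangle=Q(u)$, one computes $\langle a,\,z_0+t_mu\rangle=\langle a,z_0\rangle-\langle Az_0,u\rangle=\langle a,z_0\rangle-\langle z_0,a\rangle=0$, so every chord parallel to $u$ has its midpoint in $\Pi$; thus $\Pi$ is an affine hyperplane of symmetry of $C(x)$.

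For iii), given an interior ray $L$ choose $q\neq O$ on $L$, so $Q(q)<0$, and let $\Pi$ be the hyperplane through $q$ with normal $Aq$, that is $\Pi=\{z:\langle Aq,z\rangle=Q(q)\}$. Each direction $v$ parallel to $\Pi$ satisfies $\langle Aq,v\rangle=0$, and the computation of ii) applied to the chord $q+tv$ yields midpoint parameter $0$; hence $q$ bisects every chord of $C(x)$ through $q$ lying in $\Pi$, so $q$ is the centre of $\Pi\cap C(x)$. As $Q(q)\neq 0$ forces $O\notin\Pi$ while $q\in L\cap\Pi$, we get $\Pi\cap L=\{q\}$. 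Finally, the direction space of $\Pi$ is the $A$-orthogonal complement of the vector $q$ with $Q(q)<0$; since $Q$ has signature $(n-1,1)$ it is positive definite there, so $\Pi\cap C(x)$ is a bounded centred quadric, i.e. by i) an ellipsoid with centre $q=\Pi\cap L$.

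The main obstacle is the opening reduction to the form $Q$: the hypothesis supplies only one ellipsoidal section, and one must argue carefully that the cone it generates is a genuine solid quadric cone, in particular that the pointedness of $C(x)$ gives $C(x)=\operatorname{cone}(E_0)$ and that $T$ straightens it to the circular cone. Once $Q$ is in hand, the three statements become the short verifications above, the only remaining delicate point being the signature argument that makes the section in iii) bounded.
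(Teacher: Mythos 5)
Your proof is correct in substance but follows a genuinely different route from the paper's. The paper proves only iii) --- it declares i) and ii) immediate from the classical theory of poles and polars --- and its argument for iii) is projective: for $p\in L$, $p\neq x$, it takes a hyperplane $H$ through $p$ with $H\cap C(x)$ bounded (an ellipsoid by i)), forms the polar $(n-2)$-plane $L_p$ of $p$ with respect to that ellipsoid, sets $\Gamma:=\aff(\{x\}\cup L_p)$, and lets $\Pi$ be the hyperplane through $p$ parallel to $\Gamma$; projectively, $\Pi\cap\Gamma$ is the $(n-2)$-plane at infinity of $\Pi$ and has pole $p$ with respect to $\Pi\cap C(x)$, so $p$ is the centre of that section. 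You instead straighten the cone to one nappe of $\{Q\le 0\}$ with $Q$ a nondegenerate quadratic form of signature $(n-1,1)$ and verify all three statements by computing with the bilinear form. Your route is more elementary and self-contained, and it actually proves i) and ii) rather than citing them; the paper's route is much shorter once the classical polarity facts are granted. (One notational slip: after the reduction you should keep saying ``one nappe of $\{Q\le 0\}$''; the set $\{Q\le 0\}$ itself is a double cone, and $\bd C(x)$ is only one nappe of $\{Q=0\}$.)

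Two steps you leave open should be written out; both are routine, so neither is fatal, but as it stands they are gaps. First, the reduction $C(x)=\operatorname{cone}(E_0)$ that you flag as the main obstacle: put the apex at the origin and write $H_0=\{z:\langle a,z\rangle=1\}$ for the hyperplane of $E_0$. If $v\neq 0$ were the direction of a ray of $C(x)$ with $\langle a,v\rangle\le 0$, then for any $e\in E_0$ and all $t\ge 0$ the point $(1-t\langle a,v\rangle)\,e+tv=e+t\bigl(v-\langle a,v\rangle e\bigr)$ lies in $C(x)\cap H_0=E_0$ (a convex cone is closed under nonnegative combinations), and these points are unbounded as $t\to\infty$ unless $v=\langle a,v\rangle e$; since $v\neq 0$, this degenerate case can occur for at most one point $e\in E_0$, so applying the argument to two distinct points of $E_0$ contradicts the boundedness of $E_0$. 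Hence every ray of $C(x)$ crosses $H_0$, so $C(x)=\operatorname{cone}(E_0)$ is pointed and your map $T$ straightens it. Second, in ii) you divide by $Q(u)$, $u=A^{-1}a$, without checking that it is nonzero; positivity of $Q(u)$ is also exactly what guarantees that every line parallel to $u$ meets the nappe in a compact chord, so that your midpoint formula applies to all chords. By affine invariance it suffices to check this in the model $Q(z)=|z'|^2-z_n^2$, where $a=(a',a_n)$ gives $u=(a',-a_n)$ and $Q(u)=|a'|^2-a_n^2$: the hyperplane $\{\langle a,z\rangle=0\}$ meets the open nappe $\{|z'|<z_n\}$ iff there is $z'$ with $|z'|<1$ and $\langle a',z'\rangle=-a_n$, i.e.\ iff $|a_n|<|a'|$, which is precisely $Q(u)>0$. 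With these two insertions your argument is complete.
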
 
\begin{proof}
We will present only the proof of iii) since the proofs of i) and ii) follows immediately from the well known properties of poles and polar. For the proof of iii), let us suppose that $\Rn$ is an affine chart of the projective space $\mathbb{P}^n$. Let $p\in L$, $p\not=x$, and let $H$ be a hyperplane such that $p\in H$ and $H\cap C(x)$ is bounded. By i), the section $H\cap C(x)$ is an ellipsoid. Let $L_p$ be the polar of $p$ with respect to the ellipsoid $H\cap C(x)$. We denote by $\Gamma$ the hyperplane defined by $x$ and $L_p$. Let $\Pi$ be the hyper-plane parallel to $\Gamma$ and passing through $p$. We claim that $\Pi$ is the hyper-plane that we are looking for. In fact, the $(n-2)$-plane $L_{\infty}:=\Pi\cap \Gamma$ is the $(n-2)$-plane at the infinite in $\Pi$ whose pole with respect to the ellipsoid $\Pi \cap C(x)$ is $p$. Thus $\Pi \cap C(x)$ has center at $p$.
\end{proof} 
\begin{remark}\label{frank}
Notice that, with the notation of Lemma \ref{gatita}, the directions corresponding to the affine symmetries with respect to the hyper-planes containing $L$ are parallel to $\Pi$.
\end{remark}
Let $x\in \bd K$. We take a system of coordinates such that $O$ is the origin. By the hypothesis the cone $C(x)$ is an ellipsoidal cone. Since $L$ is $O$-symmetric the cone $C(-x)$ is an ellipsoidal cone too.

\textbf{Claim.} The curve $\Delta_x:=C(L,x)\cap C(L,-x)$ is a planar curve, i.e., there exists a plane $\Lambda_x$ such that $\Delta_x\subset \Lambda_x$.
\begin{proof}
By iii) of Lemma \ref{gatita}, there exist a plane $\Lambda_x$, through the $O$, such that $\Lambda_x \cap C_x$ has center at $O$. Thus $C(L,x)\cap C(L,-x)\subset \Lambda_x$. 
\end{proof}
\begin{lemma}\label{vane}
Let $x_1,x_2\in \bd K$ such that $l(x_1,x_2)\cap L=\emptyset$. Let $T_a,T_b$ be the supporting planes of $L$ containing $l(x_1,x_2)$ and making contact with $\bd L$ at $a,b$, respectively. Then $l(a,b)$ is parallel to $\Lambda_{x_1}\cap \Lambda_{x_2}$.
\end{lemma}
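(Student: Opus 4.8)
The plan is to pass to the projective model used in Lemma \ref{gatita} and translate everything into pole--polar language for the two quadric cones $Q_1:=C(L,x_1)$ and $Q_2:=C(L,x_2)$. Writing each cone as $\{z:\beta_i(z-x_i,z-x_i)=0\}$ for a symmetric bilinear form $\beta_i$ (with $\beta_i(x_i,x_i)\neq 0$, since $O\in\inte Q_i$), the construction in the Claim together with Lemma \ref{gatita} iii) identifies $\Lambda_{x_i}$ as the plane through $O$ parallel to the polar plane of $O$; a direct computation gives $\Lambda_{x_i}=\{z:\beta_i(z,x_i)=0\}$, so that the direction space of $\Lambda_{x_i}$ is $V_i:=\{v:\beta_i(v,x_i)=0\}$. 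As $\Lambda_{x_1},\Lambda_{x_2}$ pass through $O$, the direction of $\Lambda_{x_1}\cap\Lambda_{x_2}$ is exactly $V_1\cap V_2$, so the assertion $l(a,b)\parallel \Lambda_{x_1}\cap\Lambda_{x_2}$ is equivalent to the two conjugacy relations $\beta_1(d,x_1)=0$ and $\beta_2(d,x_2)=0$, where $d$ is a direction vector of $l(a,b)$.

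First I would pin down $a$ and $b$ intrinsically. Since $T_a,T_b$ are supporting planes of $L$ containing $\ell:=l(x_1,x_2)\ni x_1,x_2$, each is tangent to both cones, with $T_a=\operatorname{pol}_{Q_i}(a)$ and $T_b=\operatorname{pol}_{Q_i}(b)$ for $i=1,2$; moreover $a,b$ lie on both cones, and $a$ (resp.\ $b$) is the intersection of the two contact rulings, $l(x_1,a')\cap l(x_2,a'')$ (resp.\ $l(x_1,b')\cap l(x_2,b'')$), inside $T_a$ (resp.\ $T_b$). Because $T_a\cap T_b=\ell$, taking polars yields $l(a,b)=\operatorname{pol}_{Q_1}(x_2)\cap\operatorname{pol}_{Q_2}(x_1)$, which already gives $\beta_1(d,x_2-x_1)=0$ and $\beta_2(d,x_1-x_2)=0$ for free. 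Thus the whole problem collapses to producing, for each cone, one further conjugacy $\beta_i(d,x_i)=0$; equivalently, expanding $\beta_i(a-x_i,a-x_i)=\beta_i(b-x_i,b-x_i)=0$ and using $a',b'\in\Lambda_{x_i}$, it reduces to the purely geometric statement that $a$ and $b$ sit at the same parameter along the rulings of $Q_i$ issuing from $x_i$, that is, $l(a,b)\parallel\Lambda_{x_i}$ for each $i$ separately.

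The hard part will be exactly this equal--parameter statement, and it is where the central symmetry of $L$ must enter: note that $a,b$ depend only on the circumscribed cones, so the claim is really a statement about the four quadric cones $Q_1,-Q_1,Q_2,-Q_2$ circumscribing $L$, the reflected cones $-Q_i=C(L,-x_i)$ being what make $\Lambda_{x_i}$ well defined through $\Delta_{x_i}=S(L,x_i)\cap S(L,-x_i)\subset\Lambda_{x_i}$, a conic centred at $O$. My approach would be to project the two contact rulings in each plane $T_a,T_b$ onto the centred conic $\Delta_{x_i}$, so that $a',b'$ become the contacts of the two tangent lines drawn from $q_i:=\ell\cap\Lambda_{x_i}$ to $\Delta_{x_i}$, with chord of contact $l(a',b')=\operatorname{pol}_{\Delta_{x_i}}(q_i)$; I would then exploit that this conic is centred at $O$ together with the fact that the \emph{same} two planes $T_a,T_b$ serve both cones, in order to force the two contact points to divide their rulings in a common ratio. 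As a consistency check, in the model case $L=\{X:\langle X,AX\rangle\le 1\}$ the computation is transparent: the contact points satisfy $Aa,Ab\in\{\langle\cdot,x_1\rangle=\langle\cdot,x_2\rangle=1\}$, whence $d\parallel A^{-1}(x_1\times x_2)$, while $\Lambda_{x_i}$ has normal $Ax_i$; the identity $Ax_1\times Ax_2=\det(A)\,A^{-1}(x_1\times x_2)$ then gives $d\parallel \Lambda_{x_1}\cap\Lambda_{x_2}$ immediately. Reproducing this parallelism \emph{without} assuming $L$ ellipsoidal—extracting the equal--ratio condition purely from the bitangency of $Q_1,Q_2$ along $T_a,T_b$ and the $O$-centredness of the conics $\Delta_{x_i}$—is the crux I expect to absorb most of the effort.
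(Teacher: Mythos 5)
Your reductions are correct as far as they go, but they are essentially a pole--polar transcription of the paper's own skeleton rather than an alternative proof: your $\operatorname{pol}_{Q_1}(x_2)$ and $\operatorname{pol}_{Q_2}(x_1)$ are exactly the paper's planes $P(a,b,x_1)$ and $P(a,b,x_2)$, and your two ``free'' conjugacies $\beta_1(d,x_2-x_1)=0$, $\beta_2(d,x_1-x_2)=0$ only restate that $l(a,b)$ lies in both of those planes. The statement you then isolate as ``the hard part'' --- $\beta_i(d,x_i)=0$ for each $i$, i.e.\ $l(a,b)\parallel\Lambda_{x_i}$ separately --- is not a residual technicality: it is the entire content of the lemma, and you do not prove it. It is precisely the step the paper's proof performs: by Remark \ref{frank} the affine reflection of $C(L,x_i)$ in the plane $G=\aff\left(\{O\}\cup l(x_1,x_2)\right)$ has direction parallel to $\Lambda_{x_i}$; the paper claims the two reflections ($i=1,2$) share the single direction of $\Lambda_{x_1}\cap\Lambda_{x_2}$, and, since this common reflection permutes $\{T_a,T_b\}$ and hence the contact rulings, it forces $P(a,b,x_1)$ and $P(a,b,x_2)$, and so their intersection $l(a,b)$, to be parallel to that direction. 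In your language the paper's key claim is exactly ``the conjugate direction of $G$ is the same for $Q_1$ and $Q_2$,'' which, given your free relations, is equivalent to what you left open. So the proposal stops exactly where the proof has to begin.

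Moreover, the ingredients you announce for closing the gap --- ``the bitangency of $Q_1,Q_2$ along $T_a,T_b$ and the $O$-centredness of the conics $\Delta_{x_i}$'' --- provably cannot suffice. For a quadric cone $Q$ with apex $x$ and \emph{any} point $O\in\inte Q$, the set $Q\cap(2O-Q)$ is automatically a conic centred at $O$ lying in the plane through $O$ conjugate to $l(O,x)$: this is the paper's Claim, proved from Lemma \ref{gatita} iii) with no reference to $L$. Bitangency does not mention $O$ at all. Hence your proposed hypotheses hold for every $O$ in $\inte C(L,x_1)\cap \inte C(L,x_2)$, while the conclusion is a genuine constraint on $O$: writing the two bitangent cones in the normal form $F_i=(u+s_i\pi)^2+c_i vw$ (coordinates with $T_a=\{w=0\}$, $T_b=\{v=0\}$, $a=(0,1,0)$, $b=(0,0,1)$, apexes $x_i=(s_i,0,0)$, $\pi=v+w-1$), one checks that $l(a,b)\parallel\Lambda_{x_1}\cap\Lambda_{x_2}$ holds if and only if the midpoint of $[a,b]$ lies in the plane $G$, which fails for generic $O$ interior to both cones. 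So any correct argument must use the circumscribed $O$-symmetric body $L$ globally (for instance, that the reflected grazes $2O-\Sigma(L,x_i)$ remain inside both solid cones), not merely the two quadrics and their centred sections; and your ``consistency check'' with $L$ an ellipsoid cannot stand in for this, since the ellipsoidality of $L$ is exactly what Theorem \ref{camioneta} is trying to establish.
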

\begin{proof}
By Remark \ref{frank}, the directions corresponding to the affine reflection relatives to the planes containing $l(O,x_i)$, which leave $C_{x_{i}}$ invariant, are parallel to $\Lambda_{x_i}$, $i=1,2$. Thus the plane $G$ defined by $O$ and $l(x_1,x_2)$ is plane of affine symmetry of $C_{x_1}$ and $C_{x_2}$ with respect to the direction 
$\Lambda_{x_1}\cap \Lambda_{x_2}$ (See Fig. \ref{preciosisima}). It follows that the planes $P(a,b,x_1)$ and $P(a,b,x_2)$ are parallel to $\Lambda_{x_1}\cap \Lambda_{x_2}$. Hence $l(a,b)$ is parallel to $\Lambda_{x_1}\cap \Lambda_{x_2}$.
\end{proof}
\begin{figure}[H]
    \centering
    \includegraphics[width=.92\textwidth]{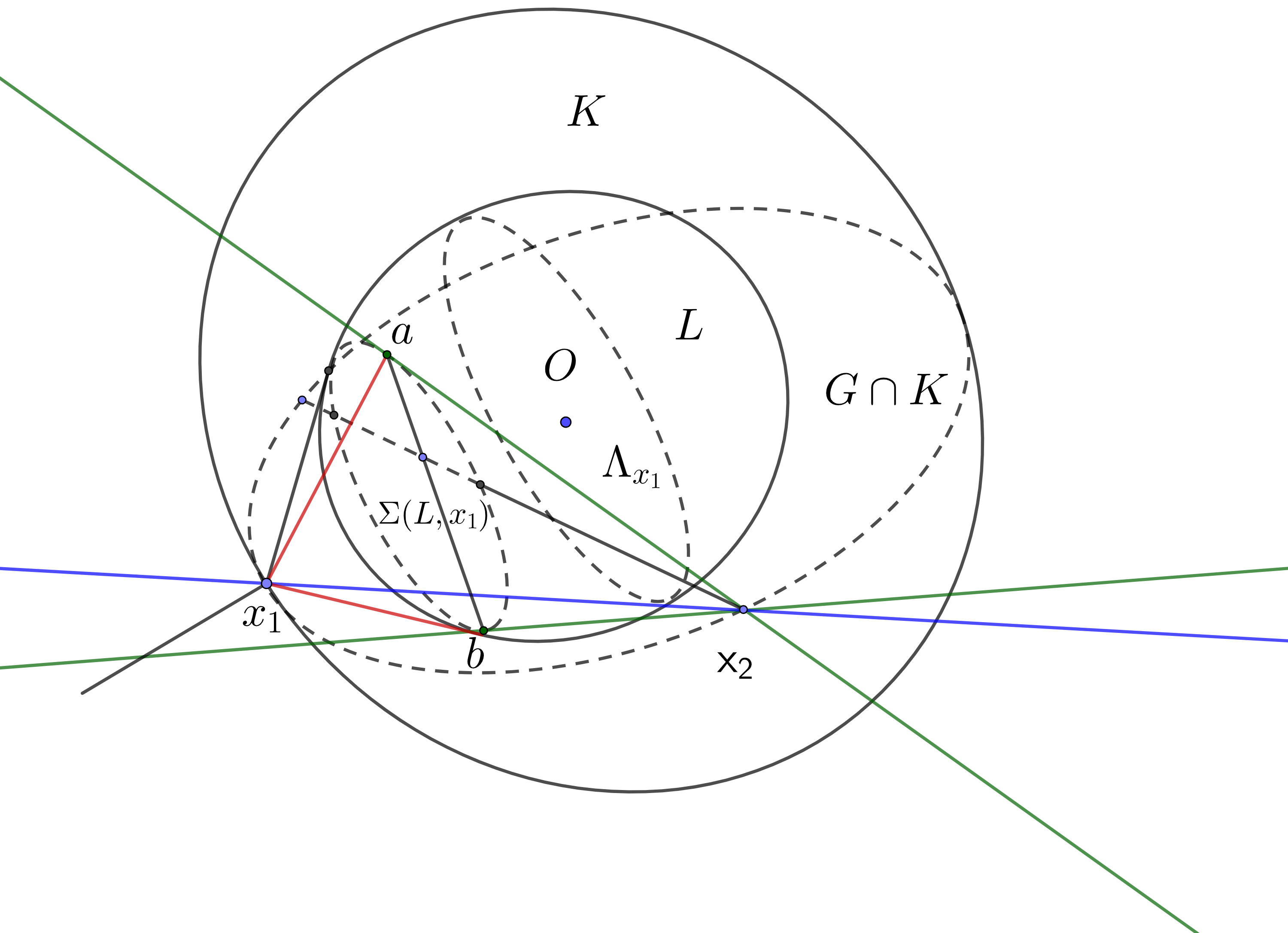}
    \caption{The chord $l(a,b)$ is parallel to $\Lambda_{x_1}\cap \Lambda_{x_2}$.}
    \label{preciosisima}
\end{figure}
\textbf{Proof of Theorem \ref{camioneta}.} 
It will be enough to consider the case $n=3$ since if $n>3$, on the one hand, every hyper-sections of $K$ and $L$ passing through $O$ satisfies the conditions of the Theorem \ref{camioneta} and, on the other hand, if every $(n-1)$-section of $L$ through $O$ is an ellipsoid, $L$ is an ellipsoid.  

Case $n=3$. We are going to prove that, for $x\in \bd K$, the graze $\Sigma (L,x)$ is contained in a plane. By the theorem 5 of \cite{gjmc1} we will conclude that $L$ is an ellipsoid.  

Let $x\in \bd K$. Pick a point $a\in \Sigma(L,x)$. Let $\Lambda$ be a plane passing through $a$ and parallel to $\Lambda_x$. Let $b\in \Sigma(L,x)$, $a\not=b$. By Lemma \ref{vane}, $l(a,b)$ is parallel to $\Lambda_x$. Thus $l(a,b)\subset  \Lambda$, i.e., $b\in  \Lambda$. Consequently, $\Sigma (L,x)\subset \Lambda$. 
\section{Characterizations of the ellipsoid in terms of the intersection of two circumscribed cones of a convex body.}
\textbf{Proof  of Theorem \ref{jetta}.}
Before we present a proof of Theorem \ref{jetta}, we will give the following definitions. A chord $[a,b]$ of a convex body $K\subset \Rn$ is said to be an \textit{affine diameter} of $K$ provided there are two parallel, distinct hyperplanes $\Pi_a$ and $\Pi_b$ both supporting $K$ such that $a\in \Pi_a$ and $b\in \Pi_b$ (\cite{soltan}).  

Let $K\subset \mathbb{R}^2$ be a convex figure.  A parallelogram $P$ is said to be \textit{circumscribed} about $K$ if $K\subset P$, the four sides of $P$ have non empty intersection with $K$. Two affine diameters of $K$ are called \textit{conjugate} provided there is a parallelogram $P$ circumscribed about $K$ such that each side of $P$ is parallel to some of the diameters (\cite{soltan}).  A \textit{Radon curve} (See \cite{radon}) is defined as a regular affine image of a convex curve with centre $O$ in the plane, such that its polar is a $90^o$ rotation about $O$. For us, it will be important to observe that a centrally symmetric convex body $K$ in the plane is bounded by a Radon curve if and only if any affine diameter of $K$ has a conjugate affine diameter (which was proved in \cite{radon}).  

If $(X,\|.\|)$ is a normed linear space and if $x,y\in X$ then we say that $x$ is \textit{normal} to $y$ and write $x\dashv y$ if $\|x+\alpha y \|\geq \|x\|$ for all $\alpha$. Geometrically this means that $x\dashv y$ if and only if the line $x+\alpha y$  ($\alpha \in \Rn$) supports the ball $B[o,\|x\|]$ at $x$.

The next result was proved in \cite{thompson}.
\begin{itemize}
\item[(T)] \textit {If $(X,\|.\|)$ is a normed linear space with $\dim X\geq 3$ then the normality is symmetric, i.e., if $x\dashv y$ then $y\dashv x$, if and only if the space is an inner product space.}
\end{itemize}

The next result will be used in the proof of Theorem \ref{jetta}, since we did not find a proof of it in the literature and since, from our point of view, such theorem is interesting by itself, we will give a short one which was suggested for one of the referees. 
\begin{theorem}\label{radon}
Let $K\subset \Rn$ be convex body $O$-symmetric, $n\geq 3$. Suppose that, for every 2-plane $\Pi$ passing through $O$, the section $\Pi \cap \bd K$ is a Radon curve. Then $L$ is an ellipsoid.
\end{theorem}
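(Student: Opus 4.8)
The plan is to reinterpret the hypothesis as a statement about Birkhoff orthogonality (the normality $\dashv$ defined above) and then to invoke Thompson's criterion (T). First I would equip $\Rn$ with the norm $\|\cdot\|$ whose unit ball is $K$; this is legitimate because $K$ is $O$-symmetric, compact, and convex with $O$ in its interior. The entire proof then reduces to showing that $\dashv$ is a \emph{symmetric} relation on this normed space, after which (T) yields that $(\Rn,\|\cdot\|)$ is an inner product space — equivalently, that $K$ is an ellipsoid — because $n\geq 3$.

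The connecting observation is that for $x,y\in\Rn$ the quantity $\|x+\alpha y\|$ depends only on the restriction of $\|\cdot\|$ to the plane $\Pi=\lin\{x,y\}$. Hence $x\dashv y$ holds in $(\Rn,\|\cdot\|)$ if and only if it holds in the $2$-dimensional normed space $(\Pi,\|\cdot\|_\Pi)$ whose unit ball is the central section $\Pi\cap K$. (The linearly dependent cases are trivial: for $y=\lambda x$ with $\lambda\neq 0$ both $x\dashv y$ and $y\dashv x$ fail, and the case $y=0$ is immediate.) Thus symmetry of normality on $\Rn$ will follow as soon as I know that normality is symmetric inside every central $2$-section.

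Next I would record the planar equivalence: a centrally symmetric convex curve in the plane is a Radon curve if and only if $\dashv$ is symmetric with respect to it. Geometrically $x\dashv y$ says exactly that the supporting line of the unit circle at $x/\|x\|$ is parallel to $y$; hence $x\dashv y$ and $y\dashv x$ hold simultaneously precisely when the diameters in the directions $x$ and $y$ are conjugate. If $\dashv$ is symmetric then, choosing for each direction $x$ some $y$ with $x\dashv y$, we obtain $y\dashv x$ as well, so every affine diameter has a conjugate; the converse implication is analogous. By the conjugate-diameter characterization quoted above, this is equivalent to the curve being a Radon curve. Applying this in each plane $\Pi$ through $O$, the hypothesis that every section $\Pi\cap\bd K$ is a Radon curve gives that $\dashv$ is symmetric in each such section.

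Combining the last two paragraphs, $\dashv$ is symmetric on all of $(\Rn,\|\cdot\|)$: given $x\dashv y$ with $x,y$ independent, restrict to $\Pi=\lin\{x,y\}$, where $x\dashv y$ still holds; since $\Pi\cap\bd K$ is Radon, $y\dashv x$ in $\Pi$, and therefore $y\dashv x$ in $\Rn$. With $n\geq 3$, criterion (T) now forces $(\Rn,\|\cdot\|)$ to be an inner product space, so its unit ball $K$ is an ellipsoid. I expect the only delicate point to be the planar equivalence between the Radon property and symmetry of $\dashv$; once the dictionary ``$x\dashv y \Leftrightarrow$ the supporting line at $x$ is parallel to $y$'' is in place, the identification with conjugate diameters is immediate, and the reduction of global normality to the planar sections is essentially free.
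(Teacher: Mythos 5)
Your proposal is correct and follows essentially the same route as the paper's own (very terse) proof: both reduce the hypothesis, plane by plane, to symmetry of the normality relation $\dashv$ for the norm whose unit ball is $K$, and then invoke Thompson's criterion (T) to conclude that the space is an inner product space, i.e.\ that $K$ is an ellipsoid. The paper compresses your planar dictionary (Radon curve $\Leftrightarrow$ symmetric normality, via conjugate diameters) into the single phrase ``by the definition of Radon curve,'' so your write-up merely makes explicit the two-dimensional reduction and the planar equivalence that the paper takes for granted.
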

\begin{proof}
By the definition of Radon curve it follows that, in the normed linear space $X$ defined by $K$, the normality is symmetric. Thus, by (T), $X$ is an inner product space. Hence $K$ is an ellipsoid.
\end{proof}

\textbf{Proof of Theorem \ref{jetta} for $n=3$}. We will present the proof of Theorem \ref{jetta}, for the case $n=3$, by means of a series of arguments which will be enumerated.  

\begin{itemize}
\item [1)] $K$ is strictly convex. On the contrary, let us assume that $K$ is not strictly convex. Let $I\subset \bd K$ be an line segment. Let $\Gamma$ be a supporting plane of $L$ containing $I$. We pick $x\in (\Gamma \cap \bd K)\backslash I$. By virtue of the hypothesis, there exists a plane $\Pi$, though $p$, and a point $y\in \bd K$ such that the relation  
\begin{eqnarray}\label{calavera}
S(L,x)\cap S(L,y)=\Pi\cap \bd K
\end{eqnarray} 
holds. Since $\Gamma$ is a supporting plane of $S(L,x)$ relation (\ref{calavera}) implies that $\Gamma \cap \Pi$ is a supporting line of $\Pi\cap \bd K$ (We can interpret the hypothesis as if by projecting from $x$ and $y$ the convex body $L$ onto $\Pi$, respectively, we obtain the section $\Pi \cap K$, therefore, if $\Gamma$ is a support plane of $L$ that passes through $x$, then the line $\Gamma \cap \Pi$ is a support line of $\Pi \cap \bd K$). Thus $\Pi=\aff\{p,I\}$. We denote by $T$ the triangle defined by $x$ and $I$. Let $z\in \inte T$. Since the line $L(x,z)$ is supporting line of $L$ there exists a point $w \in \bd L \cap L(x,z)$ in the interior of $T$. Let $u\in (\Gamma \cap \bd K)\backslash I$ and $u\not=x$ and let $v:=L(u,w)\cap \bd (\Gamma \cap \bd K)$. Again, by virtue of the hypothesis, there exists a plane $\Pi'$, though $p$, and a point $u'\in \bd K$ such that the relation  
$S(L,u)\cap S(L,u')=\Pi'\cap \bd K$ holds. However, as we have seen for $x$, the plane $\Pi'$ is equal to $\Pi$. On the other hand, $v\in \Pi \cap \bd K$. Hence $v\in \Pi \cap \bd K \cap \Gamma$. On the other hand, since $\Pi \cap \bd K \cap \Gamma= I$ it follows that $v\in I$ but, by the choice of $u$, we have that $v\notin I$. This contradiction shows that $K$ is strictly convex. 

\item [2)] Let $x,y\in \bd K$ be points and let $\Pi$ be a plane such that they satisfies the relation (\ref{nevada}). Then there exists supporting planes $\Pi_x$ and $\Pi_y$ of $K$ at $x$ and $y$, respectively, parallel to $\Pi$. 
\end{itemize}
 
Let $[u,v]$ be a diametral chord of $Z:=\Pi \cap \bd K$. By virtue that $l(x,u)$ and $l(x,v)$ are supporting lines of 
$L$ there exists $a,b\in \bd L$ such that $a\in [x,u]$ and $b\in [x,v]$. Let $\Pi_u, \Pi_v$ be supporting planes of $L$ such that $l(x,u)\subset \Pi_u$ and $l(x,v)\subset \Pi_v.$ We claim that $\Pi_u \cap \Pi_v$ is supporting line of $K$. Otherwise, there would exist $z\in \bd K$, $z\not=x$ and $z\in \Pi_u \cap \Pi_v$. Then, since the line $l(z,u)\subset \Pi_u$ is supporting line of $L$, it would exist a point $w\in \bd L$ with $w\in l(z,u)$, $w\not=a$. However, this it would imply that in the supporting line $\Pi \cap \Pi_u$ of $Z$ it would exist a point $t:=l(x,w)\cap(\Pi\cap \Pi_u)$ of $Z$, $t\not=u$, nevertheless, it would contradict the strictly convexity of $K$. By virtue that $\Pi_u \cap \Pi_v$ is parallel to $\Pi$ and it is passing through $x$, varying the diametral chords of $Z$ the lines $\Pi_u \cap \Pi_v$ generated the supporting plane $\Pi_x$ of $K$ at $x$ parallel to $\Pi$. With an analogous argument we deduce the existence of a supporting plane $\Pi_y$ of $K$ at $y$ parallel to $\Pi$.

\begin{itemize}
\item [3)] Let $\Gamma$ be a plane containing the line $l(x,y)$ and let $[u,v]:=\Gamma \cap Z$. Then there exist a plane $\Delta$ through $p$ such that $S(L,u)\cap S(L,v)=\Delta \cap \bd K$ and $l(x,y)\subset \Delta$.
\end{itemize}
Let $\bar{u}\in \bd K$ be a point and let $\Delta$ be a plane such that $p\in \Delta$ and the relation $S(L,u)\cap S(L,\bar{u})=\Delta \cap \bd K$ holds. By virtue of the hypothesis of the Theorem \ref{jetta} such $\bar{u}$ and $\Delta$ exists. Since $l(x,u)$ and $l(y,u)$ are supporting lines of $L$ it follows that the plane $\Delta$ contains the line $l(x,y)$. Let $l$ be supporting lines of $\Pi\cap L$ passing through $u$ and let $w:=l\cap \bd K$. Then $\Delta=P(x,y,w)$ and $\bar{u}=v$ (the supporting line $m$ of $\Pi\cap L$ passing through $w$, $m\not=l$, must pass, on the one hand, through the point $\bar{u}$ and, on the other hand by the point $l(p,u) \cap \bd K$ but this point is $v$).
\begin{itemize}
\item [4)] $K$ is centrally symmetric with center at $p$.
\end{itemize}
By the arbitrariness of $\Gamma$ in 3), we conclude that $p\in l(x,y)$. Thus $p=l(x,y)\cap \Pi$. By 2), there are supporting planes $G_u$, $G_v$ of $K$ at $u,v$, respectively, parallel to $\Delta$. Hence there are supporting parallel lines $G_u\cap \Pi$, $G_v \cap \Pi$ of the curve $Z$ at $u,v$. Varying the plane $\Gamma$, keeping the condition $l(x,y)\subset \Pi$, we obtain the every chord of $Z$ through $p$ is an affine diameter. By Theorem 4.1 of \cite{soltan}, it follows that $Z$ is centrally symmetric with center at $p$. Now varying the plane $\Pi$, such that $l(u,v)\subset \Pi$, we conclude that $K$ is centrally symmetric. 
\begin{itemize}
\item [5)] For every plane $\Pi$ passing through $p$ the section $\Pi \cap \bd K$ is a Radon curve. 
\end{itemize}
We take a system of coordinates with $p$ as the origin. Let $\Pi$ be a plane through $p$. By an argument of continuity, there exist $x,y\in \bd K$ such that for $x,y$ and $\Pi$ the relation (\ref{nevada}) holds. Let $[u,-u]\subset \Pi$ be a chord of $Z:=\Pi \cap \bd K$. By 2), there exist a plane $\Delta$ through $p$ such that $S(L,u)\cap S(L,-u)=\Delta \cap \bd K$ and $l(x,y)\subset \Delta$. By 2), there are supporting planes $G_u$, $G_{-u}$ of $K$ at $u,-u$, respectively, parallel to $\Delta$. Hence there are supporting lines $G_u\cap \Pi$, $G_{-u} \cap \Pi$ of the curve $Z$ at $u,-u$ parallel to $\Delta$. Let $[w,-w]:=\Delta \cap Z$. By 2), the points $w,-w$ and the plane $\Gamma:=P(x,y,u)$ are such that the relation $S(L,w)\cap S(L,-w)=\Gamma \cap \bd K$
holds. Then by 2), there are supporting planes $H_w$, $H_{-w}$ of $K$ at $w,-w$, respectively, parallel to $\Gamma$. Hence there are supporting parallel lines $H_w\cap \Pi$, $H_{-w} \cap \Pi$ of the curve $Z$ at $w,-w$ parallel to $\Gamma$. That is, the chords $[u,-u]$ and $[w,-w]$ are conjugate diameters. Thus $Z$ is a Radon curve. 

By Theorem \ref{radon}, $K$ is an ellipsoid. 
\begin{itemize}
\item [6)] For every $x\in \bd K$, the cone $S(L,x)$ is an elliptical cone. 
\end{itemize}
For $x \in \bd K$, there exists a plane $\Pi$, $p\in \Pi$, such that  $S(L,x)\cap S(L,-x)=\Pi \cap \bd K$. Since $K$ is an ellipsoid, $\Pi \cap \bd K$ is an ellipse, and $S(L,x)$ is an elliptical cone. 
\begin{itemize}
\item [7)] $L$ is strictly convex.
\end{itemize}
Suppose that $L$ is not strictly convex. Let $[a,b]\subset \bd L$ be a line segment. Let $H$ be a supporting hyperplane of $L$ containing $[a,b]$. Let $x \in H\cap K$ which does not belong to the line $l(a,b)$. On the one hand, by 6), $C(L,x)$ is an ellipsoidal cone. On the other hand, since $H$ is a supporting plane of $C(L,x)$, 
$[a,b]\subset C(L,x)$ and the line $l(a,b)$ is not passing through the apex $x$ of   
$C(L,x)$ but this is imposible by virtue that the ellipses are strictly convex.
\begin{itemize}
\item [8)] $L$ is centrally symmetric with center at $p$.
\end{itemize}
8) follows immediately from 7) and the following result which was proved in \cite{papiefen}:
  
\textit{Let $L$ be a strictly convex body in the Euclidean space $\Rn$, $n \geq 3$, and let $K$ be a hypersurface which is the image of an embedding of the sphere $\mathbb{S}^{n-1}$, such that $L$ is contained in the interior of $K$. Suppose that, for every $x \in K$, there exists $y \in K$ such that the support cones of $L$ with apexes at $x$ and $y$ differ by a central symmetry. Then $L$ and $K$ are centrally symmetric and concentric.}

By 6) and 8) and Theorem \ref{camioneta}, $L$ is an ellipsoid. 
$\square$ 

\textbf{Proof of Theorem \ref{jetta} for $n>3$}.  

\begin{lemma}\label{marisol}
If $x,y\in \bd K$ and the hyperplane $\Pi$ are such that the relation 
\[
S(x,L)\cap S(y,L)=\Pi \cap K
\]
holds, then $x,y$ and $p$ are collinear. 
\end{lemma}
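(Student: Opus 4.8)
The plan is to use the symmetry of the tangency relation together with the \emph{global} hypothesis of Theorem~\ref{jetta} — that every section hyperplane passes through the single point $p$ — since the isolated relation $S(L,x)\cap S(L,y)=\Pi\cap\bd K$ cannot by itself locate $p$ inside the $(n-1)$-plane $\Pi$. Two facts set the stage. First, as in item 1), $K$ may be taken strictly convex: the planar argument there uses only segments in $\bd K$, supporting hyperplanes and the defining relation, so it transcribes verbatim to $\Rn$. Second, writing $M:=\Pi\cap K$ and $Z:=\Pi\cap\bd K$, the equivalence ``$l(x,z)$ supports $L$'' $\Leftrightarrow$ ``$x\in S(L,z)$'' turns the hypothesis $Z=S(L,x)\cap S(L,y)$ into the statement that $x,y\in S(L,z)$ for \emph{every} $z\in Z$; that is, from each boundary point of the section the two tangent rays to $L$ run through $x$ and through $y$.

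Next I would attach to each $z\in Z$ a partner. By the hypothesis of Theorem~\ref{jetta} there are $z^{*}\in\bd K$ and a hyperplane $\Pi_{z}$ through $p$ with $S(L,z)\cap S(L,z^{*})=\Pi_{z}\cap\bd K$. The decisive step is to show that the partner returns to the section, i.e.\ $z^{*}\in Z$. Granting this, $z^{*}\in Z\subset S(L,x)\cap S(L,y)$, so the lines $l(x,z^{*})$ and $l(y,z^{*})$ support $L$ and hence $x,y\in S(L,z^{*})$; combined with $x,y\in S(L,z)$ this yields
\[
x,y\in S(L,z)\cap S(L,z^{*})=\Pi_{z}\cap\bd K\subset\Pi_{z}.
\]
Thus every companion hyperplane $\Pi_{z}$ contains the three points $x$, $y$ and $p$.

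To conclude, suppose $x,y,p$ were not collinear and put $F:=\aff\{x,y,p\}$, a $2$-plane contained in each $\Pi_{z}$. Cutting the whole picture with a $3$-flat $W\supset F$ reduces matters to a fixed plane $F=\Pi_{z}\cap W$ shared by all partners, which is exactly the degenerate configuration excluded by the mechanism of item 4): letting $z$ run over $Z\cap W$, the planar curves $S(L,z)\cap S(L,z^{*})$ would all have to lie in the one plane $F$, contradicting the strict convexity of $K$. Hence $x$, $y$ and $p$ are collinear. The two degenerate positions — $l(x,y)$ parallel to $\Pi$, and those $z$ for which $l(z,p)$ meets $M$ tangentially — are handled by a limiting argument.

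The main obstacle is the decisive step $z^{*}\in Z$, the $\Rn$-analogue of the identity $\bar u=v$ proved in item 3). I expect this to be the one place where strict convexity is essential: one must argue inside $\Pi$ that the supporting line of $M$ conjugate to $z$ meets $Z$ in a point whose two tangent rays to $L$ are forced, by the relation and strict convexity, to emanate from $x$ and $y$, so that the companion of $z$ can be nothing other than its second trace on $Z$.
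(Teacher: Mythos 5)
Your proposal has a genuine gap, and you flag it yourself: the ``decisive step'' $z^{*}\in Z$ is never proved, only conjectured to follow somehow from strict convexity. Every subsequent assertion ($x,y\in S(L,z^{*})$, hence $x,y\in\Pi_{z}$, hence $\Pi_{z}\supset\aff\{x,y,p\}$) hangs on that step, so as written the proof does not get off the ground; the content of the lemma lies exactly there. The paper avoids the obstacle by never quantifying over all of $Z$. Assuming $x,y,p$ are not collinear, it sets $y':=l(p,x)\cap\bd K$ (so $y'\neq y$, $y'\neq x$) and works inside the single $2$-plane $H:=\aff\{x,y,y'\}$, which contains $p$ by construction. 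Letting $u,v$ be the two points where the line $\Pi\cap H$ meets $\bd K$, it applies the hypothesis at the \emph{one} point $u$, obtaining $u'\in\bd K$ and a hyperplane $\Pi'\ni p$ with $S(L,u)\cap S(L,u')=\Pi'\cap\bd K$. Since $u,v\in S(L,x)\cap S(L,y)$, the four lines $l(x,u),l(x,v),l(y,u),l(y,v)$ support $H\cap L$, and inside $H$ this pins down $S(L,u)\cap\bd K\cap H=\{u,x,y\}$. The line $\Pi'\cap H$ passes through $p\in\inte K$, so it meets $\bd K$ in exactly two points, and these must lie in $\{u,x,y\}$ because $\Pi'\cap\bd K\subset S(L,u)$; ruling out $u$ (a section satisfying the relation cannot pass through the apex of one of its defining cones, since its boundary would then be confined to two rays), the two points are $x$ and $y$. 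Hence $u'=v$, $x,y\in\Pi'$, and $p\in\Pi'\cap H=l(x,y)$; combined with $p\in l(x,y')$ this forces $p=x$, absurd. Working in a well-chosen $2$-plane turns your global ``partner returns to the section'' problem into a finite tangency count, and, notably, requires no strict convexity of $K$ at all, so your preliminary transcription of item 1) is not even needed.

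Independently, your concluding step is also not an argument. Item 4) of the paper is the affine-diameter/Soltan step proving central symmetry of $K$; it contains no ``mechanism'' excluding the configuration you describe, and no reason is given why the collapse of all partner curves $S(L,z)\cap S(L,z^{*})$ onto the single curve $F\cap\bd K$ contradicts strict convexity of $K$ --- on its face it does not. If one pursues your route, the operative contradiction comes from elsewhere: the line $F\cap\Pi$ passes through $p\in\inte K$, so there is a point $q_{0}\in Z\cap F\cap\bd K$; for that point $\Pi_{q_{0}}=F$ would force the closed convex curve $F\cap\bd K$ to lie on the boundary of the cone $C(L,q_{0})$, whose apex $q_{0}$ lies in the plane $F$, confining the curve essentially to the rays in which that boundary meets $F$ --- and even this needs care when $L\cap F$ is degenerate. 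Neither this argument nor the ``limiting argument'' you invoke for the degenerate positions appears in your text; both critical steps of the proposal are promissory notes.
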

\begin{proof}
Let us suppose the opposite to the statement of Lemma \ref{marisol}, that is, let us suppose that $y\not=y':=L(p,x)\cap \bd K$, $y'\not=x$. We denote by $H$ the 2-plane defined by $x,y$ and $y'$ and let $u,v$ be the points given by the intersection of the line $\Pi \cap H$ with $\bd K$ (see Fig. \ref{end}).
Let $u'\in \bd K$ and let $\Pi'$ be a hyperplane such that  
\[
S(u,L)\cap S(u',L)=\Pi' \cap K.
\]
Since $u,v\in \Pi \cap K$ the lines $L(x,u), L(x,v), L(y,u)$ and $L(y,v)$ are supporting lines of $H\cap L$. This implies that $u'=v$ and $x,y\in \Pi'$. Thus $p\in \Pi'\cap H=L(x,y)$. On the other hand, $p\in L(x,y')$. Hence $p=x$ which is absurd. This contradiction shows that $y=y'$.
\end{proof}
\begin{figure}[H]
\includegraphics[width=.89\textwidth]{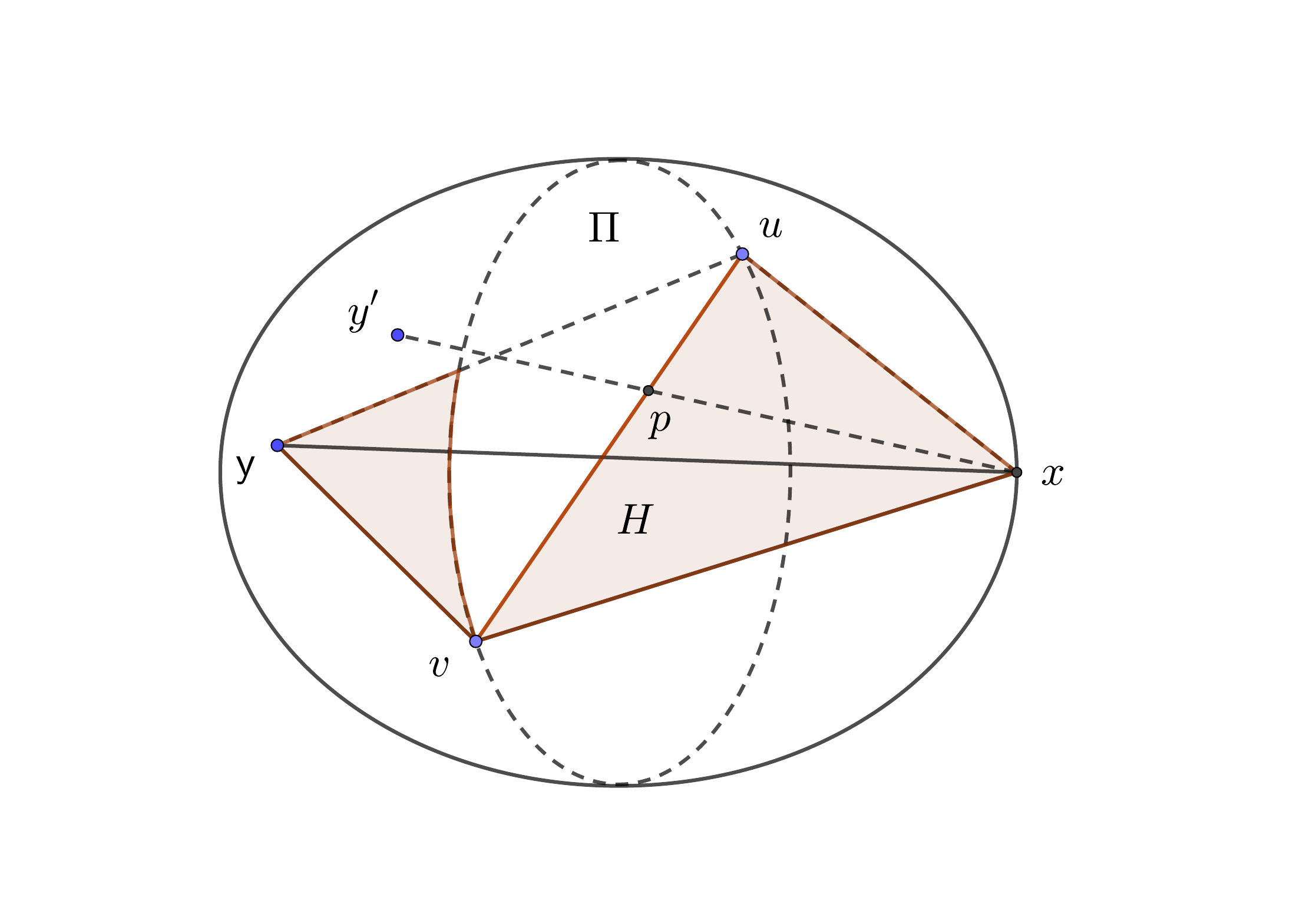}  
\centering
\caption{If $x,y\in \bd K$ and the hyperplane $\Pi$ are such that the relation \\$S(x,L)\cap S(y,L)=\Pi \cap K$ holds, then $x,y$ and $p$ are collinear.}
\label{end}
\end{figure} 

Let $H$ be a hyperplane by $p$. We will prove that for $x\in \bd (H\cap K)$ there exists $y'\in \bd (H\cap K)$ and $(n-2)$-plane $\Pi'$ for $ p $ such that
\begin{eqnarray}\label{adios}
S(H\cap L,x)\cap S(H\cap L,y')=\Pi' \cap (H\cap K). 
\end{eqnarray}
By virtue of the hypothesis there exist $y\in \bd K$ and a hyperplane $\Pi$ by $p$ such that 
\begin{eqnarray}\label{dream}
S(L,x)\cap S(L,y)=\Pi \cap K. 
\end{eqnarray}
By the Lemma \ref{marisol}, $y\in L(p,x)$. Then $y\in H$. If we make $y'=y$ and $\Pi'=\Pi \cap H$ we see that from (\ref{dream}) we have (\ref{adios}). With which the proof of the claim is complete.   

Thus every hyper-sections of $K$ and $L$ passing through $p$ satisfies the conditions of the Theorem \ref{jetta} and, on the other hand, if every $(n-1)$-section of $K$ and $L$ through $p$ are ellipsoids, then $K$ and $L$ are ellipsoids (see \cite{bus}).  Thus by an argument of induction the Theorem \ref{jetta} will follow.

\section{Characterizations of the ellipsoid in terms of a surface of poles}
 \textbf{Proof of Theorem \ref{gina}.} 
In order to prove Theorem \ref{gina} for $n=3$ we will apply Theorem 1 of \cite{gjmc2}. Since is our desire that this work will be self-contained we will present such result. We need the following definition. Let $L$ and $K$ be two $O$-symmetric convex bodies in $\Rt$, with $L\subset  \inte  K$. We say that the points $x, y \in  \bd K$ are free with respect to $L$ if the line through $x$ and $y$, $l(x, y)$, does not meet $L$. Suppose that for every point $x\in  \bd K$, the graze $\Sigma(L,x)$ is a planar curve and denote the plane where it is contained by $\Delta_x$. The body $K$ is said to be almost free with respect to $L$ if for each $z\in \bd K$  and $w\in \Pi_z \cap \bd K$, where $\Pi_z$ is the plane through $O$ parallel to $\Delta_z$, the points $z$ and $w$ are free with respect to $L$ (see Figure \ref{free}).
\begin{figure}[H]
    \centering
    \includegraphics[width=.82\textwidth]{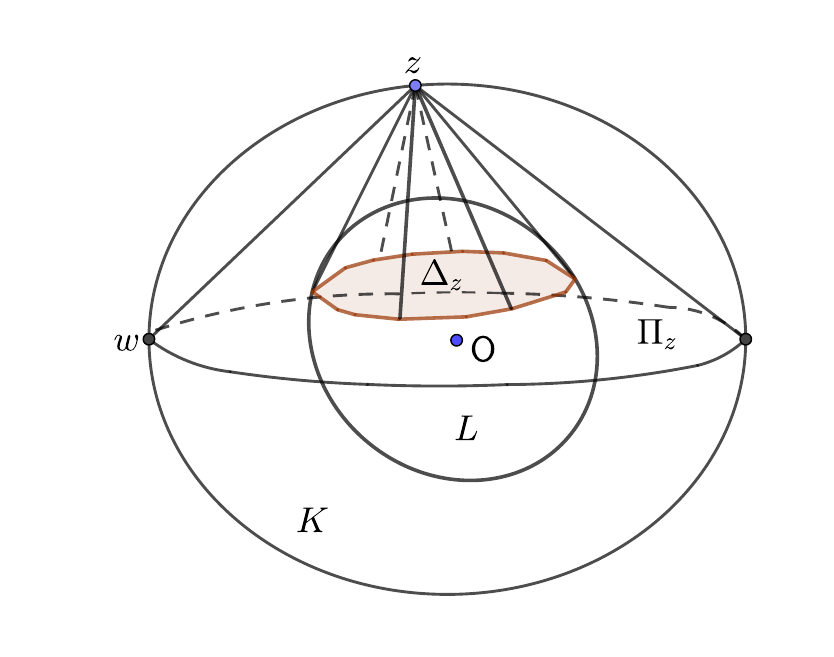}
    \caption{$K$ is almost free with respect to $L$}
    \label{free}
\end{figure}
The Theorem 1 of \cite{gjmc2} of I. Gonzalez-Garc\'ia, J. Jer\'onimo-Castro, E. Morales-Amaya and D. J. Verdusco-Hern\'andez is the following:
 
\textit{Let $K, L \subset \Rt$ be two $O$-symmetric convex bodies with $L\subset \inte K$ strictly convex. Suppose that from every $x$ in $\bd K$ the graze $\Sigma(L, x)$ is a planar curve and $K$ is almost free with respect to $L$. Then $L$ is an ellipsoid}.

For every pole $x\in \bd K$ of $L$ the polar plane will be denoted by $\Delta_x$. On the other hand, since we will only consider grazes of 
 $L$, we just denote by $\Sigma_x$ the graze $\Sigma(L,x)$, for $x\in K$. 
 For $z\in \bd K$ we will denote by $\Pi_z$ the plane through $O$ parallel to $\Delta_z$,
 
\begin{lemma}\label{boquita}
Let $z\in \bd K$. The set $\Omega_z$ is contained in $\Pi_z$.
\end{lemma}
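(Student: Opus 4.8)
The plan is to reduce everything to a planar statement and to replace the ellipsoidal structure used in Lemma \ref{gatita} by the harmonic homology attached to the pole $z$. First I would record the two structural facts the hypotheses supply. Since $z\in\bd K$ is a pole of $L$ lying outside $L$, its polar plane $\Delta_z$ is a projective plane of symmetry and, by (\ref{meche}), $\Sigma_z=\Delta_z\cap\bd L$; thus $\Sigma_z$ is planar and $S(L,z)$ is the cone over $\Sigma_z$ from $z$. Because $L$ is $O$-symmetric, $-z$ is also a pole, with $\Delta_{-z}=-\Delta_z$ and $\Sigma_{-z}=-\Sigma_z$; in particular $\Delta_z\parallel\Delta_{-z}$ and $\Pi_z$, the plane through $O$ parallel to $\Delta_z$, is exactly the plane halfway between them. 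Equivalently, the harmonic homology $\sigma_z$ with centre $z$ and axis $\Delta_z$ maps $L$ onto itself and fixes $\Sigma_z$ pointwise; this is the tool I would use in place of the ellipsoidal hypothesis of Lemma \ref{gatita}.

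Next I would slice. Fix a plane $\Gamma$ containing the axis $l(O,z)=l(z,-z)$ and set $D:=\Gamma\cap L$, a centrally symmetric figure whose pole is $z$ with polar line $\lambda:=\Delta_z\cap\Gamma=l(a_1,a_2)$, where $a_1,a_2$ are the points of $\Sigma_z\cap\Gamma$ at which the tangents $t_1,t_2$ from $z$ touch $D$. The tangents from $-z$ are $-t_1,-t_2$, the $O$-reflections of $t_1,t_2$, so $t_i\parallel-t_i$ and the only intersections of the two support cones inside $\Gamma$ are the crossed points $q:=t_1\cap(-t_2)$ and $-q:=t_2\cap(-t_1)$. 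Hence $\Omega_z\cap\Gamma=\{q,-q\}$, and since this holds for every $\Gamma\ni l(O,z)$, proving $\Omega_z\subset\Pi_z$ reduces to the purely planar assertion that $l(O,q)$ is parallel to $\lambda$, i.e.\ $q\in\Pi_z\cap\Gamma$. A sanity check on a centred disk $D$ confirms the shape of the answer: there $\lambda$ is the horizontal chord of contact and $q$ lands on the diameter orthogonal to the axis, parallel to $\lambda$.

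To obtain the parallelism I would treat the four tangents $t_1,t_2,-t_1,-t_2$ as a complete quadrilateral. Because $t_i\parallel-t_i$, the two pairs of opposite tangents meet at infinity, so the diagonal triangle has one vertex at $O=l(z,-z)\cap l(q,-q)$ and the other two at the infinite points $Q_1,Q_2$ of $l(O,z)$ and $l(O,q)$; thus the side opposite $O$ is the line at infinity and the side opposite $Q_2$ is $l(z,-z)$. I would then argue that, with respect to the polarity carried by $\sigma_z$ (which preserves $D$ and fixes $\lambda$), this diagonal triangle is self-conjugate; since $z\in l(z,-z)$, which is the polar of $Q_2$, reciprocity gives $Q_2\in\lambda$, that is $l(O,q)\parallel\lambda$. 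Carrying this back through all slices $\Gamma$ yields $\Omega_z\subset\Pi_z$.

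The hard part will be precisely this last step. Unlike the ellipsoidal setting of Lemma \ref{gatita}, the pole condition is assumed only at $z$ (and, by symmetry, at $-z$), so the self-conjugacy of the diagonal triangle cannot be quoted from conic theory and must be extracted from the single homology $\sigma_z$ together with the central symmetry $\iota$. A naive incidence computation with freely chosen contact points $a_1,a_2$ shows the parallelism can genuinely fail, so the essential input is that $\sigma_z$ maps all of $\bd L$ to itself, not merely the four tangents; this is where the real work lies. As a fallback I would keep the option of normalising projectively by sending $z$ to infinity, which turns $S(L,z)$ into a circumscribed cylinder and $\Delta_z$ into an affine hyperplane of symmetry, and then using that planarity of $\Omega_z$ is a projective invariant.
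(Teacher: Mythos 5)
Your slicing reduction coincides with the paper's: cut by a plane $\Gamma\supset l(z,-z)$, observe that $\Omega_z\cap\Gamma=\{q,-q\}$ with $q=t_1\cap(-t_2)$, and reduce the lemma to the planar claim that $l(q,-q)$ is parallel to $\lambda=\Delta_z\cap\Gamma$. But your proposal never proves this claim, and you say so yourself (``the hard part will be precisely this last step''); the ``self-conjugate diagonal triangle with respect to the polarity carried by $\sigma_z$'' is not an argument, since a single harmonic homology does not define a polarity and there is no conic available here. What the paper does at exactly this point is to import a genuinely three-dimensional ingredient that your outline never touches: Lemma 1 of \cite{gjmc2}, which states that an $O$-symmetric body whose graze $\Sigma_z$ is planar (planarity being supplied by the pole hypothesis via (\ref{meche})) has $\Sigma_z$ centrally symmetric with centre on $l(z,-z)$. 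In the slice $\Gamma$ this says that the contact chord $[a_1,a_2]$, $a_1\in t_1$, $a_2\in t_2$, has its midpoint on $l(z,-z)$; writing $a_1=z+s(q-z)$, $a_2=z+t(-q-z)$, the midpoint lies on $l(z,-z)$ iff $s=t$, which holds iff $[a_1,a_2]$ is parallel to $l(q,-q)$; since $[a_1,a_2]\subset\Delta_z$ and $O\in l(q,-q)$, this forces $q,-q\in\Pi_z$. This bisection property is the entire content of the step you left open.

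Worse, the route you propose for closing the gap cannot succeed, because the statement you are trying to extract from single-slice data is false. In one slice your hypotheses amount to: $D=L\cap\Gamma$ is invariant under the planar harmonic homology $h$ with centre $z$ and axis $\lambda$, and under $\iota=-\mathrm{id}$; this does not imply the parallelism. Normalize $z=(0,1)$ and $\lambda=\{y=c\}$, $0<c<1$. Since $h$ and $\iota$ each swap the endpoints of the chord $D\cap l(O,z)$, the map $g=\iota h$ fixes them, so these endpoints must be the fixed points $(0,\pm\sqrt{c})$ of $g$; beyond that, the group $G=\langle h,\iota\rangle$ is infinite dihedral, and a $G$-invariant convex body can be assembled freely from a fundamental arc $\beta$ joining an arbitrary point $P=(p_1,c)\in\lambda$ to an arbitrary point $R=(r_1,-c)\in-\lambda$, supported at these endpoints by the lines $l(z,P)$ and $l(-z,R)$ (for instance a conic arc tangent to those two lines); the curve $\bigcup_{\gamma\in G}\gamma(\beta)\cup\{(0,\pm\sqrt{c})\}$ is locally convex, hence bounds a centrally symmetric convex body for which $z$ is a pole with polar $\lambda$. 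Its contact chord is $[(-r_1,c),(p_1,c)]$, which is bisected by $l(O,z)$ only when $p_1=r_1$ --- a condition nothing forces. (For an ellipse $p_1=r_1$ is automatic, which is why your sanity check on the disk looked fine.) So the lemma cannot be proved by ``the real work'' you envisage inside one plane; the essential input couples all the slices through $l(z,-z)$ at once, namely the planarity of the whole graze, and that is precisely what the cited Lemma 1 of \cite{gjmc2} converts into the midpoint property. Your fallback of sending $z$ to infinity repackages the same single-slice information and has the same defect.
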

\begin{proof}
Let $\Gamma$ be a plane containing $l(x,-x)$. By the Lemma 1 of \cite{gjmc2}, for $x\in \bd K$, the graze $\Sigma_x$ has center in $l(x,-x)$. The two chords $[a,b]$, $[c,d]$ with boundary points $(\Gamma \cap \Delta_x) \cap \Sigma_x$ and $\Gamma \cap \Omega_x$, respectively, have their mid-points in the line $l(x,-x)$ (by virtue that $\Sigma_x$ has center in $l(x,-x)$ and since $\Omega_x$ is $O$-symmetric). Thus  $[a,b]$ and 
$[c,d]$ are parallel. Hence $[c,d] \subset \Pi_z$. Varying $\Gamma$, such that  $l(x,-x)\subset \Gamma$, it follows that $\Omega_x\subset \Pi_z$.  
 \end{proof}
 
\textbf{Proof of Theorem \ref{gina} for $n=3$}. 
Notice that, since that every point in $x\in \bd K$ is a pole of $L$, the set $\Sigma(L,x)$ is contained in the polar plane, then in order to prove that $L$ is an ellipsoid it will be enough to prove that $L$ is strictly convex and that the body $K$ is almost free with respect to $L$.

The strictly convexity of $L$  follows immediately from the fact that, for every $x\in \bd K$, the set $\Sigma(L,x)$ is contained in the polar plane.

By Lemma \ref{boquita}, by virtue that the condition $\Omega_z\subset \inte K$ holds for $z\in \bd K$, it follows that, for $z\in \bd K$, 
\[
\Omega_z\subset \Pi_z \cap \inte K.
\] 
Thus the points $x,z$ are free with respect to $L$ for every $x\in \Pi_z \cap \bd K$. Hence, by virtue of the arbitrariness of $z\in \bd K$, it follows that the body $K$ is almost free with respect to $L$. Therefore by Theorem 1 of \cite{gjmc2}, $L$ is an ellipsoid.  

\textbf{Proof of Theorem \ref{gina} for $n>3$}. If $n>3$, on the one hand, every hyper-sections of $K$ and $L$ passing through $O$ satisfies the conditions of the Theorem \ref{gina} and, on the other hand, if every $(n-1)$-section  $L$ through $O$ are ellipsoids, then $L$ is an ellipsoid.  Thus by an argument of induction the Theorem \ref{gina} will follow.
\section{Characterization of the elipsoide in terms of ellipsoidal sections of  convex body tangent to a sphere} 
For $u\in \mathbb{S}^{n-1}$, we denote by $H^ {+}(u)$  the closed half-space 
$\{x \in \mathbb{R} ^{n}: x\cdot u \leq 0\}$ with unit normal vector $u$, by $H(u)$ its boundary hyperplane $\{x \in \mathbb{R} ^{n}: \langle x,  u\rangle = 0\}$ and let $S_u:=H(u) \cap \Sd$. For $r\in \mathbb{R}$, we denote by $G(u)$, $rG(u)$  the affine hyperplanes $u+H(u)$, $ru+H(u)$ and by $E(u)$, $rE(u)$, $F(u)$, $rF(u)$ the half-spaces $u+H^+(u)$, $ru+H^+(u)$, $u+H^+(-u)$, $ru+H^+(-u)$, respectively.

A \textit{slab} is a set of the form $\{x \in \mathbb{R} ^{n}: a_1 < x\cdot u  < a_2 \}$, where $u\in \Sd, a_1,a_2\in \R$, $a_1<a_2$. The slab is called an $\epsilon$-slab if $\epsilon=|a_1-a_2|$.

The next result is an auxiliary tool in the proof of Theorem \ref{amanecer}.
\begin{theorem}\label{basico}
Let $K\subset \Rn$ be an strictly convex body, $n\geq 3$, and let $p\in \Rn$. Suppose that, for every hyperplane $\Pi$ through $p$, there exist an 
$\epsilon>0$ and an $\epsilon$-slab $\Delta\subset \Rn$ such that $\Pi \subset \Delta$ and, for each hyperplane $\Gamma\subset \Delta$, the section $\Gamma \cap K$ is centrally symmetric. Then $K$ is an ellipsoid. 
\end{theorem}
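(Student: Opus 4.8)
The plan is to deduce the theorem from the False Centre Theorem (FCT) recalled in the introduction, the role of the $\epsilon$-slabs being precisely to discard the exceptional alternative of the FCT. Let $\mathcal S$ denote the family of hyperplanes $\Gamma$ (meeting $\inte K$) for which $\Gamma\cap K$ is centrally symmetric. Since $K$ is compact and strictly convex, the centre of a symmetric section is unique and varies continuously, so $\mathcal S$ is closed; and the hypothesis says that $\mathcal S$ contains, together with each hyperplane $\Pi$ through $p$, a whole $\epsilon$-slab of hyperplanes parallel to $\Pi$. Taking $\Gamma=\Pi$ we record the basic fact that every hyperplane section of $K$ through $p$ is centrally symmetric.

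The first step is to pass from hyperplane sections through $p$ to $2$-plane sections through $p$: for every $2$-plane $\sigma$ through $p$, the figure $\sigma\cap K$ is centrally symmetric. When $n=3$ this is immediate, a $2$-plane being a hyperplane. For $n\ge 4$, every hyperplane $\Gamma$ containing $\sigma$ passes through $p$ and hence lies in $\mathcal S$, with a unique centre $c_\Gamma\in\Gamma$; letting $\Gamma$ run through the pencil of hyperplanes containing $\sigma$, the position of $c_\Gamma$ relative to $\sigma$ reverses, so a continuity argument (for $n=4$, intermediate value; for $n\ge5$, topological degree) produces a $\Gamma_0\supseteq\sigma$ with $c_{\Gamma_0}\in\sigma$. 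Then $\sigma\cap K$ is a central section of the symmetric body $\Gamma_0\cap K$, hence is itself centrally symmetric. The FCT now gives that either $K$ is an ellipsoid, in which case we are done, or $p$ is the centre of $K$. Note that up to here the slab widths play no role: if $p$ is not the centre (in particular if $p\notin\inte K$) we already conclude.

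It remains to treat the case $p=\text{centre of }K$; put $p=O$, so $K=-K$. Here the slabs are indispensable. Consider $G:=\{q:\ \sigma\cap K\text{ is centrally symmetric for every }2\text{-plane }\sigma\ni q\}$. By the FCT, if $K$ is not an ellipsoid then every point of $G$ must be a centre of $K$; since a convex body has a unique centre, it suffices to exhibit a single point $q\ne O$ in $G$ to force the ellipsoid conclusion. Now a hyperplane through a point $q$ near $O$ has, in each direction $u$, level $\langle q,u\rangle$ close to $0$, hence lies in $\mathcal S$ provided $|\langle q,u\rangle|<\delta_u$, where $(-\delta_u,\delta_u)$ is the slab for direction $u$; granting this for all $u$, the pencil argument of the previous step shows $q\in G$. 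Choosing such a $q\ne O$ and applying the FCT at $q$ then forces $K$ to be an ellipsoid, because $O\ne q$ cannot be a second centre.

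The crux, and the main obstacle, is the construction of the point $q\ne O$, i.e.\ a uniform lower bound $\inf_{u\in\mathbb{S}^{n-1}}\delta_u>0$ guaranteeing that the slabs over all directions share a common interior point besides $O$. This does not follow from the closedness of $\mathcal S$ alone, since a sequence of non-symmetric sections may accumulate onto a symmetric central section without contradiction; I would establish the bound by a compactness argument on $\mathbb{S}^{n-1}$, quantifying the asymmetry of a section as a continuous functional of its hyperplane and ruling out its vanishing along a sequence of slab boundaries whose directions converge, exploiting strict convexity. A secondary technical point is the degree argument required in the pencil step for $n\ge5$ (the cases $n=3,4$ being elementary); the dimensional reduction one would prefer is not available, because intersecting a centrally symmetric section with a plane that misses its centre destroys the symmetry, so the passage from $(n-1)$-dimensional to $2$-dimensional sections genuinely has to be made through the pencil.
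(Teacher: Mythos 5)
Your treatment of the non-central case is essentially sound and agrees with the paper's first step (every section through $p$ is centrally symmetric; if $p$ is not a centre, the False Centre Theorem finishes). Incidentally, the pencil/degree argument you need for $n\ge 4$ can be avoided entirely: the paper reduces to $n=3$ by orthogonal projections rather than by sections, since a section of the projection $\pi(K)$ by a hyperplane $\Gamma'$ of the image space equals $\pi\bigl(\pi^{-1}(\Gamma')\cap K\bigr)$, so the slab hypothesis descends to every projection, and a body all of whose projections are ellipsoids is an ellipsoid. The genuine problem is the case you correctly single out as the crux, $p=O$ and $K=-K$, and there your argument has a real gap. Your strategy requires a point $q\neq O$ through which \emph{every} hyperplane has a centrally symmetric section, hence a uniform bound $\inf_{u\in\mathbb{S}^{n-1}}\delta_u>0$. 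The hypothesis does not provide this: the set of hyperplanes with non-symmetric sections is relatively open (Hausdorff limits of centrally symmetric sections are centrally symmetric), so $u\mapsto\delta_u$ is only upper semicontinuous, and a positive upper semicontinuous function on a compact set can have infimum zero. Your proposed repair --- a continuous asymmetry functional that cannot vanish along a convergent sequence of slab-boundary hyperplanes --- fails for exactly the reason you yourself state two sentences earlier: if $\delta_{u_n}\to 0$ with $u_n\to u$, the offending non-symmetric sections have levels tending to $0$, so they converge to the central section, whose asymmetry is zero; no contradiction arises, and strict convexity does not change this. So the central case remains unproved.

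The paper closes this case by a different mechanism that uses each slab only \emph{locally, per direction}, so that no uniformity over directions is ever needed. Fix $\Pi$ through $O$ and its slab $\Delta$ (which may be taken symmetric about $O$). For a plane $\Gamma\subset\Delta$ parallel to $\Pi$, the section $\Gamma\cap K$ is centrally symmetric, and since $K=-K$ the opposite section satisfies $(-\Gamma)\cap K=-(\Gamma\cap K)$; hence the two sections are translates of one another, say by a vector $u_\Gamma$, and (using strict convexity) the shadow boundary $S\partial(K,u_\Gamma)$ lies in $\bd K$ outside the cylinder over $\Gamma\cap K$ in direction $u_\Gamma$. Taking a sequence $\Gamma_n\to\Pi$ inside $\Delta$ and a convergent subsequence of the normalized vectors $u_{\Gamma_n}/\|u_{\Gamma_n}\|\to v$, the regions trapping the shadow boundaries shrink down to $\Pi\cap K$, whence $\Pi\cap K=S\partial(K,v)$. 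Thus every central section of $K$ is a planar shadow boundary, and Proposition 2 of \cite{Falso_MM} (a Blaschke--Marchaud-type criterion) gives that $K$ is an ellipsoid. Unless you can actually prove your uniform lower bound on the slab widths --- which is a substantially stronger statement than the hypothesis --- your argument cannot be completed as written, and the shadow-boundary route is the one that works.
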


 \begin{proof}
By virtue that every orthogonal projection of a centrally symmetric set is centrally symmetric and the well known result \textit{if all the orthogonal projections of a convex body $K\subset \Rn, n\geq 3$, are ellipsoid, then $K$ is an ellipsoid} it is enough to consider the case $n=3$. In fact, if $n>3$, every orthogonal projection of $K$ satisfies the condition of Theorem \ref{basico} in dimension $n-1$. 
 
Case $n=3$. First of all, we observe that every section passing through $p$ is centrally symmetric. If all the sections passing through $p$ have centre at $p$, $K$ has centre $p$. Otherwise, $p$ is a false centre and, by virtue of the False Centre Theorem, $K$ is an ellipsoid and we finish in this case. Hence from now on we will assume that $K$ is centrally symmetric with centre at 
$p$. 

In order to prove that $K$ is an ellipsoid we are going to show that, for every plane $\Pi$ through $p$, the section 
$\Pi \cap K$ is a shadow boundary, which by the Proposition 2 of \cite{Falso_MM} yields that $K$ is an ellipsoid.  

We may assume that $p$ is the origin. Since $K$ is centrally symmetric we can  suppose that each $\epsilon$-slab is symmetric with respect to $p$. Let $\Pi$ be a plane, $p\in \Pi$, and let $\Delta$ be the $\epsilon$-slab corresponding to $\Pi$ given by the hypothesis. Let $\Gamma$ be a plane contained in $\Delta$. 
Notice that a convex set $W\subset \Rn$ is centrally symmetric if and only if $W$ and $-W$ are translated. Thus, since the body $K$ and the sections $\Gamma \cap K$ and $-(\Gamma \cap K)$ are centrally symmetric (but only $K$ has centre at $p$), there exists $u\in \Rt $ such that
\begin{eqnarray}\label{jenny}
-(\Gamma \cap K)=u+(\Gamma \cap K).
\end{eqnarray}
We denote by $D_u$ the cylinder determined by $\Gamma \cap K$ and the lines parallel to $u$. By (\ref{jenny}), it is clear that 
\begin{eqnarray}\label{dyana}
S\partial (K,u)\subset K_u,
\end{eqnarray}
where $K_u:=\bd K \cap (\Rt \backslash D_u)$. 

Let $\{\Gamma_n\}\subset \Delta$ be a sequence of planes such that $\Gamma_n$ is parallel to $\Pi$ for all $n$ and $\Gamma_n \rightarrow \Pi$, when $n\rightarrow \infty$. Let $\{u_n\}\subset \Rn$ such that, for each $u_n$, we have a relation of the type (\ref{jenny}) and (\ref{dyana}), i.e., in particular, the relations
\begin{eqnarray}\label{michel}
S\partial (K,u_n)\subset K_{u_n},
\end{eqnarray}
holds, where $K_{u_n}:=\bd K \cap (\Rt \backslash D_{u_n})$ and $D_{u_n}$ denotes the cylinder determined by $\Gamma_n \cap K$ and the lines parallel to $u_n$. 

We consider the sequence $\{v_n\}\subset \mathbb{S}^2$, where $v_n=\frac{u_n}{||u_n||}$. By the compactness of $\mathbb{S}^2$ there exist $v\subset \mathbb{S}^2$ and a subsequence of $\{v_n\}$, which will be denoted again by $\{v_n\}$, such that $v_n \rightarrow v$, when $n\rightarrow \infty$. It follows that $S\partial (K,v_i) \rightarrow S\partial (K,v) $. On the other hand, by virtue of the strictly convexity of $K$
\begin{eqnarray}\label{barbi}
K_{u_{n+1}}\subset K_{u_n}.
\end{eqnarray}
Furthermore, by the condition $\Gamma_n \rightarrow \Pi$, when $n\rightarrow \infty$, it follows 
\begin{eqnarray}\label{princess}
\Pi \cap K=\bigcap_{n=1}^{\infty} K_{u_n}.
\end{eqnarray}
From (\ref{michel}), (\ref{barbi})  and (\ref{princess}) we deduce that $S\partial (K,v_i) \rightarrow \Pi \cap K$. Therefore $\Pi \cap K=S\partial (K,u)$. 
\end{proof}
 \textbf{Proof of Theorem \ref{amanecer}}
Case $n=3$. We may assume that $p$ is the origin and we suppose that the radius of $B$ is equal to 1, i.e, $\bd B=\Sd$. In order to prove that $K$ is an ellipsoid we are going to use the Theorem \ref{basico}, that is, we are going to show that, for every $u\in \Sd$, there exists $\epsilon>0$ and $\epsilon$-slab $\Delta(u) \subset \Rt$ such that $O\in \Delta(u)$ and, for every plane $rG(u) \subset \Delta(u)$, the section $rG(u) \cap K$ is centrally symmetric. Notice that, in virtue of the hypothesis, there exists a continuous function $\phi:\Sd \rightarrow \Rt$ such that 
\begin{eqnarray}\label{valentina} 
K_u=\phi(u)+K_{-u},
\end{eqnarray}
where $K_u:=G(u)\cap K$.
We are going to show that, for every $u\in \Sd$, the slab $\Delta(u)$ is in fact the slab defined by the planes $G(u)$ and $G(-u)$.  

\textbf{Claim.} Let $u\in \Sd$. For every $v\in S_{\phi(u)}$, the vector $\phi(v)$ is parallel to $H(u)$.

The proof of the Claim follows immediately from the strictly convexity of $K$ and the condition (2) of the Theorem.

Let $v\in S_{\phi(u)}$. We are going to prove that the locus $l_v$ of the mid-points of the chords of the elipse $K_v$ parallel to $G(u)\cap G(v)$ is contained in a line parallel to $\phi(u)$. In order to prove this it is enough to show that the line defined by the mid-point $a,b$ of the chords $G(v) \cap K_u$, $G(v) \cap K_{-u}$ is passing through $C_v$, where $C_v$ is the centre of $K_v$, but such condition follows immediately from (\ref{valentina}), in fact, since  $v\in S_{\phi(u)}$, the relation $G(v) \cap K_{-u}=\phi(u)+(G(v) \cap K_u)$ holds, i.e., $b=\phi(u)+a$.
 
On the other hand, let $r$ be a real number with $0<r<1$. Let $\pi_u:\Rt \rightarrow rG(r)$ be the projection parallel to $u$ onto $rG(u)$. We are going to prove that $rG(u)\cap K$ is centrally symmetric with centre at $\bar{O}:=\pi_u(O)$. Pick a point $x\in \bd(rG(u)\cap K)$. Let $l$ be a supporting line of the elipse $\pi_u(B)$ and let $v\in \Sd$ such that $G(v)=\pi_u^{-1}(l)$. 
  Let $\sigma \in l_{v}, \sigma'\in l_{-v}$ be the mid-points of the chords 
$[x,y]:=rG(u)\cap K_v$ and $[x',y']:=rG(u)\cap K_{-v}$. By (\ref{valentina}) applied to $v$, it follows that $l_{-v}=\phi(v)+l_v$ and by the Claim $x'=\phi(v)+x$, $y'=\phi(v)+y$. Thus $\sigma(-v)=\phi(v)+\sigma(v)$. 
On the other hand, by the previous paragraph, $l_v$ is parallel to $\phi(u)$, furthermore, notice that $l_{-v}=-l_{v}$. Consequently, $\bar {O}$ is the centre of the parallelogram $x,x',y,y'$ and, therefore $||x-\bar{O}||=||y'-\bar{O}||$. 

The case $n>3$ follows immediately from the case $n=3$. Notice that every orthogonal projection of a centrally symmetric set (an ellipsoid) is centrally symmetric (is an ellipsoid) and the well known converse result \textit{if all the orthogonal projections of a convex body $K\subset \Rn, n\geq 3$, are centrally symmetric (ellipsoid), then $K$ is centrally symmetric (an ellipsoid)}. 

Thus, if $n>3$, every orthogonal projection of $K$ satisfies the condition of Theorem \ref{amanecer} in dimension $n-1$ with respect to the orthogonal projection of $\mathbb{S}^{n-1}$.

\end{document}